\documentclass[12pt,english,reqno]{amsart}
\usepackage[T1]{fontenc}
\usepackage[utf8]{inputenc}
\usepackage[a4paper]{geometry}
\usepackage{dsfont}
\usepackage{amstext}
\usepackage{amsthm}
\usepackage{amssymb}
\usepackage{stackrel}
\usepackage{graphicx}
\usepackage{setspace}
\makeatletter

\providecommand{\tabularnewline}{\\}

\numberwithin{equation}{section}
\numberwithin{figure}{section}
\theoremstyle{plain}
\newtheorem{thm}{\protect\theoremname}[section]
\theoremstyle{definition}
\newtheorem{defn}[thm]{\protect\definitionname}
\theoremstyle{plain}
\newtheorem{conjecture}[thm]{\protect\conjecturename}
\theoremstyle{plain}
\newtheorem{prop}[thm]{\protect\propositionname}
\theoremstyle{plain}
\newtheorem{lem}[thm]{\protect\lemmaname}
\theoremstyle{plain}
\newtheorem{cor}[thm]{\protect\corollaryname}
\theoremstyle{remark}
\newtheorem{observation}[thm]{\protect\observationname}
\theoremstyle{remark}
\newtheorem{rem}[thm]{\protect\remarkname}

\usepackage{amsmath}
\usepackage{bbm}
\usepackage{dsfont}

\usepackage[unicode=true,pdfusetitle,
 bookmarks=true,bookmarksnumbered=false,bookmarksopen=false,
 breaklinks=false,pdfborder={0 0 0},pdfborderstyle={},backref=false,colorlinks=false]
 {hyperref}

\allowdisplaybreaks
\usepackage{tikz}

\usepackage{etoolbox}
\patchcmd{\@settitle}{\uppercasenonmath\@title}{}{}{}
\patchcmd{\@setauthors}{\MakeUppercase}{}{}{}
\patchcmd{\section}{\scshape}{}{}{}

\makeatother

\usepackage{babel}
\providecommand{\conjecturename}{Conjecture}
\providecommand{\corollaryname}{Corollary}
\providecommand{\definitionname}{Definition}
\providecommand{\lemmaname}{Lemma}
\providecommand{\observationname}{Observation}
\providecommand{\propositionname}{Proposition}
\providecommand{\remarkname}{Remark}
\providecommand{\theoremname}{Theorem}

\begin{document}
\global\long\def\One{\mathds{1}}%

\global\long\def\bone{\mathds{\mathbf{1}}}%

\global\long\def\Laplacian{\Delta}%

\global\long\def\grad{\nabla}%

\global\long\def\norm#1{\left\Vert #1\right\Vert }%

\global\long\def\Z{\mathbb{Z}}%

\global\long\def\R{\mathbb{R}}%

\global\long\def\N{\mathbb{N}}%

\global\long\def\P{\mathbb{P}}%

\global\long\def\E{\mathbb{E}}%

\global\long\def\cL{\mathcal{L}}%

\global\long\def\cD{\mathcal{D}}%

\global\long\def\floor#1{\left\lfloor #1\right\rfloor }%

\global\long\def\ceil#1{\left\lceil #1\right\rceil }%

\global\long\def\var{\operatorname{Var}}%

\global\long\def\cov{\operatorname{Cov}}%

\global\long\def\dd{\operatorname{d}}%

\global\long\def\connected#1#2{\stackrel[#2]{#1}{\longleftrightarrow}}%

\title{Universality in dimension $1$ of kinetically constrained lattice
gases}
\author{Assaf Shapira}
\address{Université Paris Cité, CNRS, MAP5, F-75006 Paris, France.}
\email{assaf.shapira@math.cnrs.fr}
\urladdr{https://assafshap.github.io/}
\begin{abstract}
Kinetically constrained lattice gases are interacting particle systems
with conserved number of particles, having degenerate rates caused
by a kinetic constraint. Over the past decade, there has been major
progress in the study of their non-conservative counterpart, \emph{kinetically
constrained spin models}: in dimensions 1 and 2, we have a good understanding
of the universality classes of these models. However, the conservative
systems are much more challenging to analyze, and very few results
are available. The purpose of this paper is to describe universality
in the one dimensional case. We will characterize the three universality
classes, determining whether such a model is always ergodic, never
ergodic, or exhibits an ergodicity phase transition.
\end{abstract}

\keywords{kinetically constrained models, dynamical phase transition, ergodicity
breaking, diffusivity.}
\maketitle

\section{Introduction}

Kinetically Constrained Lattices Gases (KCLGs) form a family of interacting
particle systems with conserved number of particles, originally introduced
in the physics literature to model glasses (see, e.g., \cite{KobAndersen,RitortSollich}).
They can be seen as conservative versions of \emph{kinetically constrained
spin models} thoroughly studied in both mathematics and physics \cite{HartarskyToninelli2024kcmbook}.
These models are defined by adding \emph{constraints} to the dynamics:
transitions are only allowed when a certain condition is satisfied.
The details of the constraint depend on the exact model; in general
it requires sufficiently many sites in the vicinity to be empty. This
reflects the behavior of glasses, where motion is suppressed at high
densities.

The question of universality is of cardinal importance when studying
models in statistical physics: can we identify a few properties of
a model that completely determine the important features of its behavior?

In the case of (non-conservative) kinetically constrained spin models
in dimensions $1$ and $2$, the answer is yes: all such models can
be divided into a handful of universality classes, which determine
their large-scale behavior \cite{HartarskyToninelli2024kcmbook}.
In particular, given a kinetically constrained spin model, a quick
calculation can determine its ergodicity depending on the particle
density.

Much less is known in the case of KCLGs: there is one class of models,
called \emph{noncooperative}, which is relatively well understood
\cite{Shapira2024Noncooperative}, but it does not show the rich phenomenology
observed in other, \emph{cooperative}, KCLGs. Some results are available
for one specific cooperative model, called the Kob-Andersen model
\cite{ToninelliBiroliFisher,CMRT2010KCLG,MartinelliShapiraToninelli2020KA_gap,Shapira23KA_HL},
but no general theory is available.

This article focuses on one-dimensional models, and on the question
of ergodicity phase transition: what is the critical density, above
which the system is non-ergodic (and in particular some particles
never move), and below which it is ergodic (so all particles eventually
move)? We will identify three universality classes, determining whether
this critical density is $0$, $1$, or strictly between $0$ and
$1$.

After understanding ergodicity, we discuss diffusivity: in conservative
interacting particle systems, the \emph{diffusion coefficient} describes
the evolution of the density profile at large scale, under diffusive
scaling (i.e., when scaling $\text{time}\propto\text{space}^{2}$).
In particular, positive diffusion coefficient $D$ indicates that
on a length scale $L$ the density evolves at time scale $L^{2}/D$;
while zero diffusion coefficient indicates that the slowing down of
the dynamics caused by the constraint is strong enough to make this
evolution \emph{sub-diffusive}, taking time much longer than $L^{2}$
(possibly infinite).

We begin in Section \ref{sec:Setting_and_results} with some definitions
and statement of main results. In Sections \ref{sec:east_or} and
\ref{sec:east_and} we discuss two specific models (that can be seen
as conservative versions of the East model, as well as variations
of the model studied in \cite{ScheutzZahra2026conservative_east}),
and identify their critical parameter. These are then used in Section
\ref{sec:proof_irreducibility}, where we prove the universality result.
Section \ref{sec:proof_D} is dedicated to diffusivity of KCLGs. We
conclude in Section \ref{sec:SOC} with a remark on the link between
dynamical phase transition and self-organized criticality, describing
heuristically and numerically how it may apply to kinetically constrained
models.

\section{\label{sec:Setting_and_results}Setting and main results}
\begin{defn}
\label{def:KCLG}A kinetically constrained lattice gas is a Markov
process on $\Omega=\{0,1\}^{\Z}$ with generator 
\begin{equation}
\cL f(\eta)=\frac{1}{2}\sum_{x,y\in\Z}c(x,y,\eta)(f(\eta^{xy})-f(\eta)),
\end{equation}
where $\eta^{xy}$ is the configuration after exchanging $x$ and
$y$. The rates are given by the \emph{constraint function} $c:\Z\times\Z\times\Omega\to[0,\infty]$
satisfying:
\begin{enumerate}
\item Nearest-neighbor jumps: $c(x,y,\eta)=0$ if $\left|x-y\right|\neq1$.
By convention $c(x,y,\eta)=c(y,x,\eta)$.
\item Translation invariance: $c(x,y,\eta)=c(x+z,y+z,\tau_{z}\eta)$ for
all $x,y,z$, where $\tau_{z}$ is the translation by $z$.
\item $c(x,y,\eta)$ does not depend on the occupation at $x$ and $y$,
i.e., $c(x,y,\eta)=c(x,y,\eta^{xy})$.
\item Finite range: there exists $r>0$ such that $c(x,y,\eta)$ depends
only on the configuration in $x+[-r,r]$.
\item Monotonicity: if $\eta(x)\le\omega(x)$ for all $x$, then $c(x,y,\eta)\ge c(x,y,\omega)$.
That is, adding vacancies speeds up transitions.
\end{enumerate}
We can also define the process on a finite segment $[a,b]$ of $\Z$
by fixing boundary conditions; in this article we always consider
filled boundary, i.e., for $x,y\in[a,b]$ the constraint in the segment
is given by $c_{[a,b]}(x,y,\eta)=c(x,y,\tilde{\eta})$ where $\tilde{\eta}$
agrees with $\eta$ on $[a,b]$ and equals $1$ outside.
\end{defn}

Thanks to the third point of the definition above, for all $q\in(0,1)$
KCLGs are reversible with respect to the Bernoulli product measure
with density $1-q$: 
\begin{equation}
\mu_{q}=\prod_{x\in\Z^{d}}\text{Ber}(1-q).
\end{equation}

\begin{defn}
\label{def:connected}Fix a KCLG (i.e., a constraint function $c$).
Two configurations $\eta,\eta'\in\Omega$ are \emph{connected}, denoted
$\eta\connected{}c\eta'$, if there is a sequence of configurations
$\eta_{0},\eta_{1},\dots,\eta_{n}$ starting with $\eta_{0}=\eta$
and ending at $\eta_{n}=\eta'$, such that $\eta_{i+1}=\eta_{i}^{x,y}$
for $x,y$ such that $c(x,y,\eta_{i})\neq0$. We denote $\eta\connected Lc\eta'$
for configurations connected in the interval $[1,L]$.
\end{defn}

\begin{description}
\item [{Terminology}] We say that a transition $\eta\to\eta^{x,y}$ is
\emph{legal}, or that the constraint is \emph{satisfied}, if $c(x,y,\eta)\neq0$.
A path $\eta_{0},\eta_{1},\dots,\eta_{n}$ is called legal if it consists
of legal transitions. In this language, $\eta\connected{}{}\eta'$
means that there exists a legal path from $\eta$ to $\eta'$.
\end{description}

\begin{defn}
\label{def:qc}A KCLG is said to be \emph{ergodic} at $q$ if $\mu_{q}$-almost
surely $\eta\connected{}{}\eta^{0,1}$. Note that due to the monotonicity
of $c$ adding vacancies helps ergodicity, and we may define the critical
parameter
\begin{eqnarray*}
q_{c} & = & \inf\{q\in(0,1):\text{the process is ergodic for }q\}\\
 & = & \sup\{q\in(0,1):\text{ the process is not ergodic for }q\}.
\end{eqnarray*}
By the same argument as \cite{BertiniToninelli}, this notion of ergodicity
implies that $0$ is a simple eigenvalue of the generator in $L^{2}(\mu_{q})$.
\end{defn}

We are ready to state our main result:
\begin{thm}
\label{thm:q_c}KCLGs in one dimension divide in three universality
classes:
\begin{enumerate}
\item If $c(-1,0,\bone_{(-\infty,-1]})\neq0$ and $c(-1,0,\bone_{[0,\infty]})\neq0$,
then $q_{c}=0$. In this case the model is \emph{noncooperative} (see
\cite{Shapira2024Noncooperative} and Section \ref{subsec:noncooperative}
below).
\item If $c(-1,0,\bone_{(-\infty,-1]})=0$ and $c(-1,0,\bone_{[0,\infty)})=0$,
then $q_{c}=1$. We say in this case that the model is \emph{blocked}.
\item If $c(-1,0,\bone_{(-\infty,-1]})\neq0$ and $c(-1,0,\bone_{[0,\infty)})=0$
or vice versa, then $q_{c}\in(0,1)$. We call such models \emph{one-sided}.
\end{enumerate}
By $\bone_{(-\infty,-1]}$ (respectively $\bone_{[0,\infty)}$) we
mean the configuration filled in $(-\infty,-1]$ (respectively $[0,\infty)$)
and empty elsewhere.
\end{thm}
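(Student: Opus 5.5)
We treat the three cases separately. Two facts from Definition \ref{def:KCLG} are used throughout. By finite range, $c(-1,0,\cdot)$ depends only on a window of radius $r$. By monotonicity, any configuration with more vacancies than a reference configuration has rate at least as large, and any configuration with more particles has rate at most as large.

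\emph{Noncooperative case.} Suppose both rates are nonzero. Then a particle sitting at the boundary of an empty interval of length at least $r$ can jump into it from either side, by monotonicity, since the actual configuration has at least as many vacancies in the relevant window. The same holds for a particle already inside such an empty region. Hence an empty block of length of order $r$ is a mobile cluster: it can be translated one step left or right by a legal path that restores the configuration outside it.

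Since $q>0$, $\mu_q$-a.s.\ such a block exists somewhere. We then:
\begin{enumerate}
\item transport the block to the origin;
\item exchange the contents of $0$ and $1$ inside it;
\item transport it back along the reversed path.
\end{enumerate}
The composite path connects $\eta$ to $\eta^{0,1}$, so $q_c=0$. This is essentially the argument of \cite{Shapira2024Noncooperative}, recalled in Section \ref{subsec:noncooperative}.

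\emph{Blocked case.} Suppose both rates vanish. By monotonicity and finite range, a bond whose $r$ sites to the left are all occupied is blocked, regardless of what lies to its right. Symmetrically, a bond whose $r$ sites to the right are all occupied is blocked.

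Now take a block of at least $2r+1$ consecutive particles. Each bond at its edge has $r$ filled sites on the block side, so it is illegal. Exchanges at inner bonds swap two particles and do nothing. Hence the block is frozen forever.

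For any $q<1$, the event that $[-2r,0]$ is filled and $\eta(1)=0$ has positive $\mu_q$-probability. On this event $\eta(0)=1$ can never change, so $\eta\not\connected{}{}\eta^{0,1}$. Thus the model is never ergodic, and $q_c=1$.

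\emph{One-sided case.} By reflection symmetry, assume jumps out of a filled left half-line into an empty right half-line are allowed, while jumps out of a filled right half-line are forbidden. The plan is to sandwich $c$ between two explicit one-sided models and transfer their critical parameters.

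\begin{enumerate}
\item \emph{Upper bound on $c$.} By monotonicity, if the $r$ sites to the right of a bond are filled, then $c=0$ there. So $c$ is zero whenever an ``East-or''-type constraint is zero; that constraint allows a bond as soon as its right window contains a vacancy. Any legal path for $c$ is therefore legal for this model, and $q_c\ge q_c^{\text{or}}$.
\item \emph{Lower bound on $c$.} By monotonicity, $c>0$ whenever the $r$ sites to the right of the bond are empty. So any path legal for the ``East-and''-type model, which requires the whole right window to be empty, is legal for $c$. Hence $q_c\le q_c^{\text{and}}$.
\end{enumerate}
These are the models of Sections \ref{sec:east_or} and \ref{sec:east_and}, where the critical parameters are computed. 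In particular I would take from there that $q_c^{\text{or}}>0$ and $q_c^{\text{and}}<1$, which gives $q_c\in(0,1)$.

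The main obstacle is establishing those two bounds.
\begin{itemize}
\item For $q_c^{\text{and}}<1$, at high vacancy density I would use a renormalization argument. Large empty intervals are abundant, and one moves them leftward hierarchically by East-like moves to reach the origin.
\item For $q_c^{\text{or}}>0$, at low vacancy density one must exhibit a positive-probability frozen structure. I would try a hierarchical, East-type combinatorial bound: to move a vacancy leftward across a filled stretch of length $\ell$ requires of order $\log \ell$ vacancies brought in from the right. A Peierls/first-moment estimate then shows that, for $q$ small, with positive probability the origin is never reachable. This step is the most delicate.
\end{itemize}
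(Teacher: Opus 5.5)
Your case analysis is the paper's proof. You use a mobile empty cluster in the noncooperative case. In the blocked case you use a frozen filled block; you take $2r+1$ sites where the paper takes $[1,r+1]$, and both work. In the one-sided case you use the same sandwich $q_c^{\lor}\le q_c\le q_c^{\land}$ between the $r$-East-or and $r$-East-and models. Taking Propositions \ref{prop:qc_or} and \ref{prop:qc_and} as given, this is complete.

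There is one slip in the noncooperative case. If the block is transported \emph{onto} the origin, sites $0,1$ are both empty, because their original contents have been carried across the block, so there is nothing to exchange. Instead, stop with the block just to the right of the bond, e.g. with $[2,2r+2]$ empty. Then swap $0,1$, which is legal by $c(-1,0,\bone_{(-\infty,-1]})\neq0$ and monotonicity, and reverse the transport. The reversed moves stay legal because their legality was certified only by the vacancies of the block.

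The more serious problem is the self-contained route you sketch for the two bounds you call the main obstacle; for $q_c^{\lor}>0$ it would fail. The $\log\ell$ cost is the intuition of the non-conservative East model. If crossing a filled stretch of length $\ell$ required only $O(\log\ell)$ vacancies, this necessary condition would be met at large scales, since such a stretch typically holds $q\ell\gg\log\ell$ vacancies. No first-moment or Peierls bound could then produce a frozen structure; the heuristic points to $q_c=0$, not $q_c>0$.

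In the conservative model the cost is linear. Vacancies cannot be created, and a vacancy moves left only if another vacancy lies within distance $k$ on its right. The paper makes this exact with the invariant $\mathcal{A}_L^{\lor}=\{(k+1)V_l\le l,\ \forall l\le L\}$. A legal move at $(x-1,x)$ inside $[1,L]$ changes only $V_{x-1}$, raising it by one. If that broke the bound at $l=x-1$, the vacancy at some $x+j$ with $j\le k$, demanded by the constraint, would already give $(k+1)V_{x+j}>x+j$, contradicting $\mathcal{A}_L^{\lor}$. For $q<1/(k+1)$, $V_l-l/(k+1)$ is a random walk with negative drift. Hence $\bigcap_L\mathcal{A}_L^{\lor}$ has positive probability, and on it $[1,k]$ stays filled forever.

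No renormalization is needed for $q_c^{\land}<1$ either. Suppose $(k+1)F_L\le L-k$ for some $L$, which holds a.s. when $q>k/(k+1)$. Take a configuration reachable inside $[1,L]$ with maximal center of mass. It has no legal rightward jump, and a pigeonhole argument shows this forces $[1,k]$ to be empty. So $(-1,0)$ becomes legal there. The path can then be folded back, since East-type constraints inside $[1,L]$ do not see sites $-1,0$.
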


Finally, we consider the diffusivity of these models. This is already
done in \cite{Shapira2024Noncooperative} for noncooperative models,
showing that the diffusion coefficient is strictly positive. Below
$q_{c}$ (so in particular for all $q$ in blocked models) some edges
are never crossed, so the diffusion coefficient must be $0$. We are
left with one-sided models:
\begin{thm}
\label{thm:D}Consider a one-sided KCLG. Then there exists $q_{0}$
such that for any $q>q_{0}$ the associated diffusion coefficient
is strictly positive.
\end{thm}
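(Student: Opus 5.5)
The plan is to bound the diffusion coefficient from below by comparing with a noncooperative model, using the variational characterization of $D$ and the canonical-path technique of Gon\c{c}alves--Landim--Toninelli as adapted in \cite{Shapira2024Noncooperative}. Recall that $q$ is the vacancy density, so the regime $q>q_{0}$ is the very empty one. Throughout, assume without loss of generality that $c(-1,0,\bone_{(-\infty,-1]})\neq0$, so that an exchange is possible as soon as everything to its right is empty.

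In the variational formula, $D$ (up to the factor $1/(q(1-q))$) is the infimum over local functions $f$ of
\[
\mu_{q}\Big[c(0,1,\eta)\Big(\eta(1)-\eta(0)+\sum_{x}\big(\tau_{x}f(\eta^{01})-\tau_{x}f(\eta)\big)\Big)^{2}\Big].
\]
It therefore suffices to show that this quantity dominates $\kappa$ times the same quantity for a reference model whose diffusion coefficient is known to be positive, with some $\kappa>0$. As the reference model I take either the simple exclusion process or a noncooperative KCLG, using the positivity result of \cite{Shapira2024Noncooperative}. The comparison goes as follows.

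\textbf{Step 1.} For each configuration $\eta$ and each edge $\{x,x+1\}$, I write the unconstrained exchange $\eta\to\eta^{x,x+1}$ as a legal path $\gamma_{x}(\eta)$ of the one-sided model. This path first brings a suitably empty region to the right of $x$ into place, then performs the exchange, and then undoes the preparatory moves. The existence of such a path inside a finite window comes from the irreducibility results of Sections \ref{sec:east_or}--\ref{sec:east_and} and Section \ref{sec:proof_irreducibility}. Concretely, for $q$ close to $1$, an interval $[x+1,x+\ell]$ containing a ``good'' block, namely a stretch of length $r$ that is fully empty, can be reorganized so that the constraint at $\{x,x+1\}$ is satisfied.

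\textbf{Step 2.} I control the cost of these paths. Let $\ell_{x}(\eta)$ be the distance from $x$ to the first good block on its right. The path $\gamma_{x}$ has length at most polynomial (say $C\ell_{x}^{k}$) and stays inside $[x,x+\ell_{x}+r]$. Along the path, each intermediate configuration must be recoverable from the current one together with boundedly much information per site; this is the usual injectivity or bounded-multiplicity requirement. By Cauchy--Schwarz, the quadratic form for the reference model is then bounded by the one for the KCLG times
\[
\sup_{x}\,\sum_{\ell}P(\ell_{x}=\ell)\,\ell^{2k+1}.
\]
Since good blocks are i.i.d.\ events of probability at least $q^{r}$ on disjoint blocks, $\ell_{x}$ has an exponential tail, and this sum is finite.

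\textbf{Step 3.} I must also handle the current term $\sum_{x}\tau_{x}f$. Its handling requires that the telescoping along $\gamma_{x}$ reproduces exactly $\eta(x+1)-\eta(x)$ plus gradient terms of a local function. This is where I follow the argument in \cite{Shapira2024Noncooperative} verbatim: the preparatory moves are undone, so their net particle current vanishes, and each path is supported in a window whose size has finite moments.

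I expect the main obstacle to be Step 1 combined with the multiplicity bound in Step 2. Noncooperative models have a mobile cluster that can be transported, but one-sided models do not. An empty region can only ``help'' on one side, so the preparatory moves must propagate a vacancy-rich front leftwards from the good block to $x$, in an East-like hierarchical fashion. The danger is that this creates path lengths, or a number of configurations sharing a given transition, that are exponential in $\ell_{x}$. My first attempt would be to use the explicit constructions of Sections \ref{sec:east_or}--\ref{sec:east_and} to get polynomial, or at worst $e^{c\ell}$ with $c$ small, bounds. I would then choose $q_{0}$ large enough that the tail $(1-q^{r})^{\ell/\ell_{0}}$ beats this growth. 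This is precisely where the assumption $q>q_{0}$ enters.
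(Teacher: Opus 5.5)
Your route differs from the paper's. You compare the infimum (Dirichlet-form) variational formula with that of SSEP via canonical paths, whereas the paper uses the dual, Thomson-type principle of Theorem \ref{thm:thomson}. As set up, however, Step 2 has a genuine gap. Write $G=\sum_x\tau_x f-\sum_y y\,\eta(y)$, expand $\nabla_{01}G(\eta)$ along $\gamma_0(\eta)$, apply Cauchy--Schwarz, change variables $\eta\mapsto\eta_i$ and collect multiplicities. What you actually obtain is
\[
\mu\left[(\nabla_{01}G)^{2}\right]\le\mu\left[c(0,1,\eta)\,(\nabla_{01}G(\eta))^{2}\,W(\eta)\right].
\]
Here $W(\eta)$ is the congestion of the transition $\eta\to\eta^{0,1}$, i.e.\ the sum of the lengths of all canonical paths (over all translates and starting configurations) that use it. Since $f$ is arbitrary, $(\nabla_{01}G)^2$ can concentrate where $W$ is large. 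To close the estimate for every $f$ you therefore need $\|W\|_\infty<\infty$. The law of $\ell_x$ never enters, and the averaged factor $\sup_x\sum_\ell P(\ell_x=\ell)\ell^{2k+1}$ does not follow. In a one-sided model there is no mobile cluster, so paths of every length $N$ are needed on sets of positive measure, and every transition on such a path has $W\ge N$. The comparison constant is therefore infinite for every $q$. Taking $q_0$ large cannot help, because $q$ only affects probabilities. A noncooperative reference model is ruled out for another reason: when everything to the right of an edge is filled, a one-sided model can never perform that exchange, while a noncooperative one can.

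The missing idea is to dualize. Pair $\nabla G$ with an admissible flow $\phi\in V_0$ and use Cauchy--Schwarz once, globally. This yields $D\ge\frac{1}{q(1-q)}\langle\phi^1,\phi\rangle^2/\langle\phi,\phi\rangle$, where $\langle\phi,\phi\rangle$ is the $\mu$-second moment of the total flow through one edge. That is a function of $\eta$ alone, so tail bounds can now be used. This is what the paper does, after reducing to $k$-East-and by pointwise domination of rates (Observation \ref{obs:D_and}):
\begin{enumerate}
\item For each $\omega$ with $\omega(-1)=1$, $\omega(0)=0$, it sends one unit of flow along a path from $\omega$ to $\omega^{-1,0}$.
\item The divergence is $\eta(-1)-\eta(0)$, so $\psi\in V_0$.
\item At most $2^{L+2}$ paths pass through a given configuration; this is beaten by $\mu[l(\eta)=L]\le 32^{-L}$ for $q$ large, giving $\langle\psi,\psi\rangle\le 11$.
\item One-step paths give $\langle\phi^1,\psi\rangle\ge\frac12(1-q)q^k$.
\end{enumerate}
Your window in Step 1 also needs fixing: a single fully empty stretch of length $r$ at distance $\ell_x$ does not suffice. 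For example, suppose it is preceded by a filled stretch of length at least $r^2+1$ starting at $x+1$, and followed only by particles. Then the East-or invariance argument of Section \ref{sec:east_or}, applied to the dominating $r$-East-or model, shows that the constraint at $\{x,x+1\}$ is never satisfied. The right window is $l(\eta)=\inf\{l:(k+1)F_l(\eta)<l\}$ for the dominated $k$-East-and model, with paths that repeatedly perform the leftmost legal right-jump (Lemma \ref{lem:and_move_0}). That part is repairable; Step 2 is where the argument genuinely breaks.
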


\begin{conjecture}
The diffusion coefficient of a KCLG is strictly positive for all $q>q_{c}$.
\end{conjecture}

\section{\label{sec:east_or}The $k$-East-or model}

In this section we study, for fixed $k\in\N$, the (conservative)
$k$-East-or model given by the constraint:
\begin{equation}
c^{\lor}(x-1,x,\eta)=\begin{cases}
1 & \text{if }\eta(x+1)\eta(x+2)\times\dots\times\eta(x+k)=0,\\
0 & \text{otherwise}.
\end{cases}
\end{equation}
That is, the constraint is satisfied (i.e., $c^{\lor}(x-1,x,\eta)\neq0$)
when \emph{at least one} of the $k$ sites to the east is empty.
\begin{prop}
\label{prop:qc_or}Denote the critical parameter of the $k$-East-or
model by $q_{c}^{\lor}$ (see Definition \ref{def:qc}). Then 
\[
q_{c}^{\lor}=\frac{1}{k+1}.
\]
\end{prop}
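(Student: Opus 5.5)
The plan is to prove the two inequalities $q_{c}^{\lor}\ge\frac{1}{k+1}$ and $q_{c}^{\lor}\le\frac{1}{k+1}$ separately. Both rest on a counting heuristic that I will make rigorous. A swap on the edge $(y-1,y)$ needs a vacancy in $[y+1,y+k]$. So a vacancy at $v$ enables exactly the $k$ edges $(y-1,y)$ with $v-k\le y\le v-1$, all of them strictly to its left. A vacancy can only be pushed left using other vacancies to its right. Hence, to bring constraint-satisfying vacancies to a given edge, one must ``pay'' with vacancies coming from the east, and each vacancy can bridge a stretch of at most $k$ occupied sites. This suggests assigning weight $k$ to each empty site and weight $-1$ to each occupied site, and reading the configuration from a point $x$ eastwards:
\[
S_{x}(n)=\sum_{j=1}^{n}\bigl(k(1-\eta(x+j))-\eta(x+j)\bigr).
\]
Under $\mu_{q}$ this is a random walk with drift $kq-(1-q)=(k+1)q-1$, which changes sign exactly at $q=\frac{1}{k+1}$.

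\textbf{Non-ergodicity for $q<\frac{1}{k+1}$.} I will prove a blocking lemma. Call $x$ a \emph{barrier} if $S_{x}(n)<0$ for all $n\ge1$; in words, every interval $[x+1,x+n]$ contains strictly fewer than $\frac{n}{k+1}$ vacancies. The claim is that if $x$ is a barrier, then along any legal path the sites $[x+1,x+k]$ never contain a vacancy. Consequently the edge $(x-1,x)$ is never crossed, and no vacancy of $[x+1,\infty)$ ever enters $(-\infty,x]$.

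To prove the lemma I will look for an invariant of the dynamics restricted to $[x+1,\infty)$, namely the barrier inequality itself, possibly with a modified (e.g.\ non-strict or shifted) threshold. The argument is by induction on legal moves. A swap inside $[x+1,\infty)$ only permutes the configuration. Such a swap can break the inequality only by moving a vacancy left across some position $x+n$. For this to be legal there must be another vacancy within distance $k$ to its right, and one checks that this forces the prefix $[x+1,x+n+k]$ to already violate the inequality, giving a contradiction.

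I expect getting the exact threshold and the boundary effects right to be the main obstacle. If the plain prefix condition is not preserved, I will first try a Lyapunov-type quantity instead, such as $\min_{n}S_{x}(n)$ over blocks, or a ``greedy leftmost packing'' argument. Such an argument would show that the leftmost configuration reachable from $\eta|_{[x+1,\infty)}$ has its vacancies spaced at least $k+1$ apart from the first one onward, and hence cannot reach $[x+1,x+k]$. Once the lemma holds, the rest is easy for $q<\frac{1}{k+1}$: the walk has negative drift, so with positive probability $0$ is a barrier. By translation invariance and ergodicity of $\mu_{q}$, barriers exist on both sides of the origin almost surely. The configuration is then confined between two barriers, and $\eta\connected{}{}\eta^{0,1}$ fails with positive probability.

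\textbf{Ergodicity for $q>\frac{1}{k+1}$.} Here the drift is positive, so almost surely there are points $z$ far to the right of $1$ that are ``super-barriers'': $S_{z}(n)\ge0$ for all $n$. I will construct a legal path explicitly, by induction on the length, in the following form. If a finite interval $[a,b]$ with filled right boundary satisfies $S_{a-1}(n)\ge0$ for all $n\le b-a+1$, then its vacancies can be rearranged so as to place a vacancy in $[a,a+k-1]$, and then restored. The induction compacts the rightmost vacancies into a ``train'' with gaps at most $k$ and moves it left one step at a time, each step consuming the budget counted by $S$. Applying this with $a$ near $1$ and a suitable $b$, which I will choose using the positive drift together with a stationarity argument to handle the finitely many sites between $1$ and the good point, makes the constraint at $(0,1)$ satisfied. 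We then perform the swap and reverse the path, which gives $\eta\connected{}{}\eta^{0,1}$ almost surely.
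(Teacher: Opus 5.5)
Your non-ergodicity half follows the paper's route. The prefix-density inequality is invariant. A legal leftward move of a vacancy from $x+n+1$ needs a second vacancy at $x+n+1+j$ with $j\le k$. So if the move breaks the inequality on $[x+1,x+n]$, the prefix $[x+1,x+n+1+j]$ (not $[x+1,x+n+k]$) already violated it. Your strict version $S_x(n)<0$ is the convenient choice, because it forces $[x+1,x+k+1]$ to be filled. Then the boundary edge $(x,x+1)$ is also illegal, and no vacancy can be injected at $x+1$ from the left. You should state this, since your induction only treats swaps inside $[x+1,\infty)$. With the non-strict $(k+1)V_l\le l$ used in the paper, site $x+k+1$ may be empty, so that edge needs separate care. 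The second barrier is unnecessary.

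The ergodic half has a genuine gap. Your key lemma assumes $S_{a-1}(n)\ge0$ for \emph{all} $n\le b-a+1$. Already at $n=1$ this reads $k(1-\eta(a))-\eta(a)\ge 0$, i.e.\ $\eta(a)=0$. So the lemma applies only when $[a,a+k-1]$ already contains a vacancy, and it is vacuous. For the same reason a super-barrier $z$ only tells you $\eta(z+1)=0$. It gives no mechanism for bringing a vacancy across the low-density stretch $[1,z]$, which is the whole difficulty. You defer exactly this to an unspecified ``stationarity argument'' and a train construction that is only heuristic.

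The missing point is that a single \emph{total} count suffices, and no explicit path is needed. For $q>\frac{1}{k+1}$, the law of large numbers gives almost surely some $L$ with $(k+1)V_L>L$. Among the configurations reachable by legal moves inside $[1,L]$ (filled boundary), take one maximizing $\sum_{x=1}^{L}x\eta(x)$. Suppose its sites $[1,k]$ were all filled. Then $[k+1,L]$ would contain more than $\frac{L}{k+1}$ vacancies, so by pigeonhole two of them lie at distance at most $k$. For such a pair $(x,y)$ with $x$ minimal, one has $\eta(x-1)=1$. The swap at $(x-1,x)$ is then legal (thanks to $y$) and raises the center of mass, a contradiction. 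Hence the maximizer has a vacancy in $[1,k]$, so the swap at $(-1,0)$ is legal there. Retracing the path, whose moves neither touch nor look at sites $-1,0$, shows that $\eta$ is connected to $\eta^{-1,0}$. By translation invariance of $\mu_q$, this gives ergodicity.
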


Before going into the proof, define for $L\in\N$ and $\eta\in\Omega$
\begin{equation}
V_{L}=V_{L}(\eta)=\#\{\text{vacancies in }[1,L]\}=\sum_{x=1}^{L}(1-\eta(x)).
\end{equation}

\subsection{Upper bound}
\begin{lem}
\label{lem:or_move_0}Let $\eta\in\Omega$ and $L\in\N$ such that
$(k+1)V_{L}>L$. Assume $c^{\lor}(-1,0,\eta)=0$. Then there exist
$x\in[2,L]$ such that $\eta(x)=0$, $\eta(x-1)=1$, and $c^{\lor}(x-1,x,\eta)=1$.
\end{lem}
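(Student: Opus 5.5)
The plan is to argue by contradiction, using only the structure of the vacancies in $[1,L]$. First I unpack the hypothesis $c^{\lor}(-1,0,\eta)=0$: by definition of the $k$-East-or constraint (with $x=0$), it says $\eta(1)\eta(2)\cdots\eta(k)=1$, i.e.\ the sites $1,\dots,k$ are all occupied. Since $(k+1)V_{L}>L\ge0$, we have $V_{L}\ge1$, so there are vacancies in $[1,L]$. All of them lie in $[k+1,L]$, and in particular in $[2,L]$.

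Next I look at the maximal runs of consecutive vacancies inside $[1,L]$. Let $x$ be the leftmost site of such a run. Then $\eta(x)=0$ and $x\ge k+1\ge2$. Moreover $\eta(x-1)=1$: either $x-1\le k$, where it is occupied by the previous step, or $x-1$ is an occupied site of $[1,L]$ by maximality of the run. So every left endpoint of a vacancy run is a candidate, and it fails to satisfy the conclusion only if $c^{\lor}(x-1,x,\eta)=0$, that is, only if $\eta(x+1)=\dots=\eta(x+k)=1$. Here I use the actual configuration $\eta$ on $\Z$, so sites beyond $L$ are allowed.

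Suppose for contradiction that no $x\in[2,L]$ has the required properties. Then every run has a left endpoint $x$ followed by $k$ occupied sites. Hence every run consists of a single vacancy, and any two vacancies in $[1,L]$ are at distance at least $k+1$. List the vacancies as $x_{1}<\dots<x_{V_{L}}$. Then $x_{1}\ge k+1$ and $x_{i+1}-x_{i}\ge k+1$, so
\[
L\ge x_{V_{L}}\ge (k+1)+(V_{L}-1)(k+1)=(k+1)V_{L},
\]
which contradicts $(k+1)V_{L}>L$.

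No step looks like a real obstacle. The only point needing care is to check that $\eta(x-1)=1$ at the left endpoint even when $x-1$ falls in the initial block $[1,k]$, and that the constraint at $x$ may look past $L$. Both are handled above.
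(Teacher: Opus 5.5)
Your proof is correct and is essentially the paper's argument. Both rest on the same spacing/pigeonhole count: more than $L/(k+1)$ vacancies in $[k+1,L]$ cannot all be $k+1$ apart. Your choice of a left endpoint of a vacancy run plays the role of the paper's leftmost close pair in guaranteeing $\eta(x-1)=1$.

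One small point: you do not need to look past $L$. Your count only uses vacancies inside $[1,L]$, so it actually yields a certifying vacancy in $[x+1,\min(x+k,L)]$. That stronger form is what the next lemma needs, since there transitions are taken in $[1,L]$ with filled boundary.
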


\begin{proof}
Since $c^{\lor}(-1,0,\eta)=0$, the interval $[1,k]$ is completely
filled. Therefore, the interval $[k+1,L]$ contains more than $\frac{L}{k+1}$
vacancies. By the pigeonhole principle, at least two of them must
fall within distance less than $k+1$. We may then take the leftmost
such pair, i.e., among all pairs $(x,y)\in[1,L]$ such that $y\in[x+1,x+k]$
and $\eta(x)=\eta(y)=0$, take one with minimal $x$. We know that
$x>k$ (so in particular $x\in[2,L]$), by minimality $\eta(x-1)=1$,
and $c^{\lor}(x-1,x,\eta)=1$ since $\eta(y)=0$.
\end{proof}
\begin{lem}
Let $\eta\in\Omega$ and $L\in\N$ such that $(k+1)V_{L}>L$. Then
$\eta\connected{}{\lor}\eta^{0,-1}$.
\end{lem}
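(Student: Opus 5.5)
The plan is to move vacancies inside $[1,L]$ leftwards until one of them reaches $[1,k]$, perform the exchange of $-1$ and $0$, and then undo the preparatory moves. If $\eta(-1)=\eta(0)$ then $\eta^{0,-1}=\eta$ and there is nothing to prove, and if $c^{\lor}(-1,0,\eta)=1$ the single transition $\eta\to\eta^{0,-1}$ is legal. So assume $c^{\lor}(-1,0,\eta)=0$.

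\emph{Step 1 (preparatory moves).} Build a legal path $\eta=\eta_{0},\eta_{1},\dots,\eta_{n}$ as follows. As long as $c^{\lor}(-1,0,\eta_{i})=0$, apply Lemma \ref{lem:or_move_0} to $\eta_{i}$. It gives $x_{i}\in[2,L]$ with $\eta_{i}(x_{i})=0$, $\eta_{i}(x_{i}-1)=1$ and $c^{\lor}(x_{i}-1,x_{i},\eta_{i})=1$. Set $\eta_{i+1}=\eta_{i}^{x_{i}-1,x_{i}}$.

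Since $x_{i}-1,x_{i}\in[1,L]$, each move only relocates a vacancy within $[1,L]$. Hence $V_{L}(\eta_{i+1})=V_{L}(\eta_{i})$, the hypothesis $(k+1)V_{L}>L$ persists, and the lemma can be applied again. The quantity $\sum_{y\in[1,L]}y\,(1-\eta_{i}(y))$ decreases by exactly $1$ at each step and is a nonnegative integer. So the procedure stops after finitely many steps, at some $\eta_{n}$ with $c^{\lor}(-1,0,\eta_{n})=1$. Note also that none of these moves touches the sites $-1,0$.

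\emph{Step 2 (the swap and the reversal).} The transition $\eta_{n}\to\eta_{n}^{0,-1}$ is legal. I then claim that
\[
\eta_{n}^{0,-1},\ \eta_{n-1}^{0,-1},\ \dots,\ \eta_{0}^{0,-1}=\eta^{0,-1}
\]
is a legal path. Each step $\eta_{i+1}^{0,-1}\to\eta_{i}^{0,-1}$ is the exchange of $x_{i}-1$ and $x_{i}$. Its constraint $c^{\lor}(x_{i}-1,x_{i},\cdot)$ depends only on the sites $x_{i}+1,\dots,x_{i}+k\ge3$, which are unaffected by swapping $-1$ and $0$. By property 3 of Definition \ref{def:KCLG}, this constraint takes the same value at $\eta_{i+1}$ as at $\eta_{i}$, namely $1$. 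Moreover, the swap of $\{-1,0\}$ commutes with the swaps inside $[1,L]$, since they involve disjoint sites. Concatenating the forward path, the swap, and the reversed path gives $\eta\connected{}{\lor}\eta^{0,-1}$.

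The only substantive point is termination of Step 1. This needs that the hypothesis of Lemma \ref{lem:or_move_0} is preserved along the path, which holds because the moves stay inside $[1,L]$, together with the strictly decreasing potential. The rest is the standard observation that East-type constraints look only to the right, so the preparatory moves can be undone after the swap.
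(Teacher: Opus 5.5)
Your proof is correct and follows the paper's argument. The paper picks a configuration connected to $\eta$ within $[1,L]$ with maximal center of mass and applies Lemma \ref{lem:or_move_0} there to get $c^{\lor}(-1,0,\cdot)=1$, then performs the swap and folds the path back exactly as you do; your greedy iteration with the decreasing potential $\sum_{y\in[1,L]}y(1-\eta_{i}(y))$ is just a constructive version of that extremal choice, the same one the paper uses later in Definition \ref{def:path}.
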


\begin{proof}
For any configuration $\eta$, we consider its center of mass 
\begin{equation}
\text{com}(\eta)=\sum_{x=1}^{L}x\eta(x).
\end{equation}
Let $\eta'$ be a configuration connected to $\eta$ with maximal
center of mass. That is, $\eta'\connected L{\lor}\eta$ and $\text{com}(\eta'')\le\text{com}(\eta')$
for all $\eta''\connected L{\lor}\eta$. There is no transition in
$\eta'$ legal for the dynamics that moves a particle to the right,
therefore by Lemma \ref{lem:or_move_0} $c^{\lor}(-1,0,\eta')=1$
(since $V_{L}$ doesn't change when exchanging sites in the segment
$[1,L]$).

Since $\eta'\connected L{\lor}\eta$, we may construct a legal path
$\eta=\eta_{0},\eta_{1},\dots,\eta_{n}=\eta'$ (as in Definition \ref{def:connected}).
We then set $\eta_{n+1}=\eta_{n}^{-1,0}$ which is legal since $c^{\lor}(-1,0,\eta')=1$,
and $\eta_{n+1+i}=\eta_{n-i}^{-1,0}$ for $i\le n$ which again describe
legal transitions since the constraint in $[1,L${]} does not depend
on the sites $-1$ and $0$. We conclude that $\eta\connected{}{\lor}\eta_{2n+1}=\eta^{-1,0}$.
\end{proof}
\begin{cor}
$q_{c}^{\lor}\le\frac{1}{k+1}.$
\end{cor}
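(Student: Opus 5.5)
We will show that for every $q>\frac{1}{k+1}$ the $k$-East-or model is ergodic at $q$. Then $q_{c}^{\lor}\le q$ for every such $q$, and the corollary follows from the definition of $q_{c}^{\lor}$ as an infimum. By Definition \ref{def:qc}, ergodicity at $q$ means that $\mu_{q}$-almost surely $\eta\connected{}{\lor}\eta^{0,1}$. The plan is to produce, almost surely, an $L$ to which the preceding lemma applies, and then to move the relevant exchange to the correct edge by translation invariance.

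First, the lemma must be aligned with the edge $\{0,1\}$ rather than $\{-1,0\}$. By translation invariance (point 2 of Definition \ref{def:KCLG}), the preceding lemma applied to $\tau_{1}\eta$ gives the following shifted statement. If the number of vacancies $V'_{L}$ in $[2,L+1]$ satisfies $(k+1)V'_{L}>L$, then $\eta\connected{}{\lor}\eta^{0,1}$. Here $\eta^{0,1}=\eta^{1,0}$, since exchanges are symmetric.

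Second, we show that such an $L$ exists almost surely. Under $\mu_{q}$ the variables $1-\eta(x)$ are i.i.d.\ Bernoulli$(q)$. By the strong law of large numbers, $V'_{L}/L\to q$ almost surely. Since $q>\frac{1}{k+1}$, almost surely $V'_{L}/L>\frac{1}{k+1}$ for all large $L$; in particular some $L$ satisfies $(k+1)V'_{L}>L$. Hence $\mu_{q}$-almost surely $\eta\connected{}{\lor}\eta^{0,1}$, so the model is ergodic at $q$ and $q_{c}^{\lor}\le q$. Letting $q\downarrow\frac{1}{k+1}$ gives the claim.

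There is no real obstacle here; the only point needing care is the translation and indexing in the first step. The path built in the lemma uses only transitions inside $[1,L]$ together with the single exchange $(-1,0)$. Its legality is therefore preserved under translation, because the constraint is translation invariant and the configuration outside the window is simply carried along unchanged.
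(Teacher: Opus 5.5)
Your proof is correct and follows the paper's argument: almost surely some $L$ satisfies $(k+1)V_{L}>L$, and the preceding lemma then gives the required exchange at the origin. The translation by one site, which moves the lemma's edge $\{-1,0\}$ to the edge $\{0,1\}$ of Definition \ref{def:qc}, fills in an indexing step the paper leaves implicit; using the strong law instead of the paper's convergence in probability is an inessential difference.
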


\begin{proof}
Let $q>\frac{1}{k+1}$. Since $V_{L}\sim\text{Bin}(L,q)$ by the law
of large numbers
\[
\mu_{q}\left[(k+1)V_{L}\le L\right]\xrightarrow{L\to\infty}0,
\]
hence almost surely there exists $L\in\N$ such that $L<(k+1)V_{L}$.
The previous lemma then guarantees $\eta\connected{}{\lor}\eta^{0,1}$.
\end{proof}

\subsection{Lower bound}

For $L\in\N$, define the event 
\begin{equation}
\mathcal{A}_{L}^{\lor}=\left\{ (k+1)V_{l}\le l,\quad\forall l\le L\right\} .
\end{equation}

\begin{lem}
\label{lem:leaving _A_or}Let $\eta\in\mathcal{A}_{L}^{\lor}$ and
$\eta'\connected L{\lor}\eta$. Then $\eta'\in\mathcal{A}_{L}^{\lor}$.
\end{lem}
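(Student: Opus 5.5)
The plan is to reduce the statement to a single legal transition and then use the constraint to compare two prefixes. By Definition \ref{def:connected}, $\eta'\connected L{\lor}\eta$ means there is a legal path $\eta=\eta_{0},\eta_{1},\dots,\eta_{n}=\eta'$ in $[1,L]$ with filled boundary. By induction on $n$, it therefore suffices to show that one legal transition $\eta\to\eta^{x-1,x}$, with $x-1,x\in[1,L]$, maps $\mathcal{A}_{L}^{\lor}$ into itself.

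Next, we identify which quantities $V_{l}$ a transition can change. Exchanging the contents of $x-1$ and $x$ leaves $V_{l}$ unchanged for every $l\neq x-1$: for $l\ge x$ both sites lie in $[1,l]$, and for $l<x-1$ neither does. So only $V_{x-1}$ can change.
\begin{itemize}
\item If $\eta(x-1)=\eta(x)$, nothing changes.
\item If a vacancy moves from $x-1$ to $x$ (a particle moves left), then $V_{x-1}$ decreases, and the condition $(k+1)V_{x-1}\le x-1$ is preserved trivially.
\item The only case to check is $\eta(x-1)=1$, $\eta(x)=0$. Here the new configuration has $V_{x-1}(\eta^{x-1,x})=V_{x-1}(\eta)+1$.
\end{itemize}

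For this remaining case, the key step is to exploit the constraint. Since the transition is legal, $c^{\lor}(x-1,x,\tilde{\eta})=1$ for the boundary-extended configuration $\tilde{\eta}$. Hence some $y\in[x+1,x+k]$ satisfies $\tilde{\eta}(y)=0$. Because the boundary outside $[1,L]$ is filled, we must have $y\le L$, so $y$ is a genuine vacancy of $\eta$ in $[1,L]$. This is exactly where the filled boundary condition enters.

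The interval $[1,y]$ then contains all vacancies of $[1,x-1]$, together with the vacancies at $x$ and at $y$. Thus $V_{y}(\eta)\ge V_{x-1}(\eta)+2$. Applying the hypothesis $\eta\in\mathcal{A}_{L}^{\lor}$ at $l=y\le L$ gives
\[
(k+1)\bigl(V_{x-1}(\eta)+2\bigr)\le(k+1)V_{y}(\eta)\le y\le x+k .
\]
Subtracting $k+1$ from both sides yields $(k+1)\bigl(V_{x-1}(\eta)+1\bigr)\le x-1$. This is precisely the required bound $(k+1)V_{x-1}(\eta^{x-1,x})\le x-1$.

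Hence $\eta^{x-1,x}\in\mathcal{A}_{L}^{\lor}$, and the induction concludes the proof. The argument is short, and the main point needing care is to check that the witness vacancy $y$ lies inside $[1,L]$. That is what lets us apply the prefix condition at $l=y$.
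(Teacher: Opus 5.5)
Your proof is correct and follows the paper's argument: you reduce to a single legal transition with $\eta(x-1)=1$, $\eta(x)=0$, use the constraint together with the filled boundary to find a vacancy $y\in[x+1,x+k]$ with $y\le L$, and apply the prefix condition at $l=y$ using $V_{y}\ge V_{x-1}+2$. The only difference is presentational: you derive $(k+1)\bigl(V_{x-1}(\eta)+1\bigr)\le x-1$ directly, whereas the paper runs the same computation as a proof by contradiction.
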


\begin{proof}
Without loss of generality $\eta'=\eta^{x-1,x}$ for some $x$ such
that $c(x-1,x,\eta)=1$. Assume by contradiction $\eta'\notin\mathcal{A}_{L}^{\lor}$.
Then for some $l\le L$, 
\[
(k+1)V_{l}(\eta)\le l\text{ but }(k+1)V_{l}(\eta')>l.
\]
For $V_{l}$ to grow when exchanging $x-1$ with $x$, necessarily
\[
l=x-1,\quad\eta(x-1)=1,\quad\eta(x)=0.
\]
Since $V_{l}$ grows by one in this case, 
\[
(k+1)V_{l}(\eta)\le l,\quad(k+1)(V_{l}(\eta)+1)>l,
\]
which implies $l-k-1<(k+1)V_{l}\le l.$

We also know that $c(x-1,x,\eta)=1$, so there exist $j\le k$ such
that $x+j\le L$ and $\eta(x+j)=0$. This means that there are at
least two vacancies ($x$ and $x+j$) in the interval $[x,x+j]$,
hence 
\[
V_{l+j+1}\ge V_{l}+2.
\]
This contradicts $\eta\in\mathcal{A}_{L}^{\lor}$, since 
\begin{eqnarray*}
(k+1)Z_{l+j+1} & \ge & (k+1)V_{l}+2(k+1)>l-k-1+2(k+1)\\
 & = & l+k+1\ge l+j+1.
\end{eqnarray*}
\end{proof}
\begin{cor}
$q_{c}^{\lor}\ge\frac{1}{k+1}.$
\end{cor}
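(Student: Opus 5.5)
Fix $q<\frac{1}{k+1}$. The plan is to show that with positive $\mu_q$-probability the edge $(0,1)$ is never crossed, which contradicts ergodicity at $q$. Since this holds for every $q<\frac1{k+1}$, we get $q_c^{\lor}\ge\frac{1}{k+1}$.

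\emph{Step 1: a probabilistic input.} We need $\mu_q(\mathcal{A}_\infty^{\lor})>0$, where $\mathcal{A}_\infty^{\lor}=\bigcap_L\mathcal{A}_L^{\lor}$ is the event that $(k+1)V_l\le l$ for all $l\ge1$. The partial sums $S_l=l-(k+1)V_l$ form a random walk with i.i.d.\ increments $1-(k+1)(1-\eta(x))$. These increments take the value $1$ with probability $1-q$ and $-k$ with probability $q$, so their mean $1-(k+1)q$ is positive. A random walk with positive drift stays nonnegative forever with positive probability. Concretely, $\inf_l S_l>-\infty$ almost surely, so some finite $m$ has $\P(\inf_l S_l\ge -m)>0$. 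Forcing the first $N$ sites to be filled (a positive-probability event independent of the rest) lifts the walk by $N$ and makes it stay $\ge0$ with positive probability. Alternatively, one can cite the standard fact that the ascending ladder structure of a walk with positive drift gives $\P(S_l\ge0\ \forall l)>0$. Along the way we check that the increments are bounded below by $-k$, so there is no overshoot issue.

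\emph{Step 2: invariance and the obstruction.} Suppose $\eta\in\mathcal{A}_\infty^{\lor}$. Then $\eta\in\mathcal{A}_L^{\lor}$ for every $L$, and by Lemma~\ref{lem:leaving _A_or} every configuration reachable inside $[1,L]$ (with filled boundary) stays in $\mathcal{A}_L^{\lor}$. In particular $V_l\le l/(k+1)<1$ for $l\le k$, so the sites $1,\dots,k$ remain filled. Hence $c^{\lor}(-1,0,\cdot)=0$ throughout.

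The difficulty is that the dynamics on $\Z$ is not confined to $[1,L]$. Sites to the left of $0$ can interact with $[1,\infty)$, and infinitely many moves could in principle occur. I would handle this as follows. A legal path from $\eta$ to $\eta^{0,1}$ is finite, so it only touches sites in some finite window. Consider the first time the edge $(0,1)$ is used. Before that time, no particle or vacancy has crossed between $(-\infty,0]$ and $[1,\infty)$. The constraint for edges inside $[1,\infty)$ depends only on sites to the east, so it depends on the configuration of $[1,\infty)$ alone. Therefore the restriction to $[1,L]$, for $L$ beyond the window plus $k$, follows a legal path of the dynamics in $[1,L]$. Filled boundary is harmless here: sites beyond the window are unchanged, and any constraint that was satisfied using them is still satisfied.

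The edge $(0,1)$ itself has constraint $c^\lor(0,1,\cdot)$, which looks at sites $2,\dots,k+1$, not $1,\dots,k$. So I will instead show that the edge $(-1,0)$ is never crossed, and then translate by one (the measure is translation invariant, so ergodicity fails equally). That edge requires one of the sites $1,\dots,k$ to be empty, which the invariance forbids. So the edge $(-1,0)$ is never crossed, and $\eta\not\connected{}{\lor}\eta^{-1,0}$ whenever $\eta(-1)\ne\eta(0)$.

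\emph{Conclusion and main obstacle.} The event $\mathcal{A}_\infty^{\lor}\cap\{\eta(-1)\ne\eta(0)\}$ has positive probability, since the two parts are independent. So the model is not ergodic at $q$. The main obstacle is the reduction in Step 2 from the infinite-volume path to the finite-volume Lemma~\ref{lem:leaving _A_or}. The key point there is the one-sidedness of the East constraint: dynamics in $[1,\infty)$ is unaffected by the left half until the edge $(-1,0)$ is crossed.
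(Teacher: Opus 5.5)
Step 1 is fine, and your overall route is the paper's: a random walk with drift gives $\mu_q(\mathcal{A}^{\lor}_\infty)>0$, then Lemma~\ref{lem:leaving _A_or} on $[1,L]$ keeps $[1,k]$ filled. The gap is in Step 2.

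Your reduction is valid only \emph{before the first use of the edge $(0,1)$}. Up to that time, the restriction to $[1,L]$ is indeed a legal path with filled boundary. When you switch to the edge $(-1,0)$, however, you keep using the invariance for all times, and nothing prevents $(0,1)$ from being used first. Its constraint looks at $[2,k+1]$, and $\mathcal{A}^{\lor}_\infty$ allows $\eta(k+1)=0$. So if $\eta(0)=0$, the vacancy at $0$ can be pulled into site $1$, after which $c^{\lor}(-1,0,\cdot)=1$.

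Concretely, take $k=2$ and $\eta\in\mathcal{A}^{\lor}_\infty$ with $\eta(-2)=0$, $\eta(-1)=1$, $\eta(0)=0$, $\eta(1)=\eta(2)=1$, $\eta(3)=0$; this event has positive probability. Exchange $(0,1)$, then $(-2,-1)$, then $(-1,0)$. Each step is legal, and the last one carries a particle from $0$ to $-1$. So both ``sites $1,\dots,k$ remain filled'' and ``the edge $(-1,0)$ is never crossed'' are false on $\mathcal{A}^{\lor}_\infty\cap\{\eta(-1)\neq\eta(0)\}$.

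The conclusion that $\eta$ is not connected to $\eta^{-1,0}$ does still hold on that event. But when $\eta(0)=0$, proving it requires tracking the net flux through $(-1,0)$, not showing the edge is unused. The paper's one-line reduction (``there must be $L$ with $\eta'$ reachable in $[1,L]$ and $c^{\lor}(-1,0,\eta')=1$'') passes over this same boundary edge.

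A cheap repair is to shrink the event to $\mathcal{A}^{\lor}_\infty\cap\{\eta(-1)=0,\ \eta(0)=1\}$, which still has positive probability by independence. Let $t$ be the first step of a legal path after which some site of $[0,k]$ is empty. At every step up to and including $t$:
\begin{enumerate}
\item the edges $(0,1),\dots,(k-1,k)$ act trivially, since both endpoints are filled;
\item the edge $(-1,0)$ is blocked, since $[1,k]$ is filled;
\item edges to the left of $-1$ do not touch $[0,\infty)$.
\end{enumerate}
Hence site $0$ is untouched. Every move affecting $[1,\infty)$, including step $t$ itself (necessarily on $(k,k+1)$), is on an edge $(x-1,x)$ with $x\ge 2$. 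For $L$ large these moves form a legal path in $[1,L]$ with filled boundary. Lemma~\ref{lem:leaving _A_or} then keeps $[1,k]$ filled after step $t$, a contradiction. So $[0,k]$ stays filled forever, $(-1,0)$ is never crossed, and since $\eta(-1)\neq\eta(0)$, $\eta$ is not connected to $\eta^{-1,0}$.
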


\begin{proof}
Fix $q<\frac{1}{k+1}$. We first note that
\begin{equation}
\mu(\mathcal{A}_{L}^{\lor}\text{ for all }L\in\N)=\mu\left(V_{l}-\frac{l}{k+1}\le0,\quad\forall l\in\N\right)>0.
\end{equation}
Indeed, $V_{l}-\frac{l}{k+1}$ is a random walk with IID increments
of mean $q-\frac{1}{k+1}<0$, hence it has positive probability to
remain negative for all $l$.

Fix $\eta\in\cap_{L\in\N}\mathcal{A}_{L}^{\lor}$, and assume $\eta\connected{}{\lor}\eta^{-1,0}$.
Then there must be $L$ large enough such that $\eta\connected L{\lor}\eta'$
and $c^{\lor}(-1,0,\eta')=1$. But $\eta'\in\mathcal{A}_{L}^{\lor}$
by Lemma \ref{lem:leaving _A_or}, and in particular $(k+1)V_{k}(\eta')\le k$
which means $V_{k}(\eta')=0$. This contradicts $c^{\lor}=1$. 
\end{proof}

\section{\label{sec:east_and}The $k$-East-and model}

The (conservative) $k$-East-and model is given, for fixed $k\in\N$,
by the constraint 
\begin{equation}
c^{\land}(x-1,x,\eta)=\begin{cases}
1 & \text{if }\eta(x+1)=\eta(x+2)=\dots=\eta(x+k)=0,\\
0 & \text{otherwise}.
\end{cases}
\end{equation}
That is, the constraint is satisfied when \emph{all} $k$ sites to
the east are empty.
\begin{prop}
\label{prop:qc_and}Denote the critical parameter of the $k$-East-and
model by $q_{c}^{\land}$ (see Definition \ref{def:qc}). Then 
\[
q_{c}^{\land}=\frac{1}{k+1}.
\]
\end{prop}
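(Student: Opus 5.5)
The lower bound should follow from the $k$-East-or model by comparison, while the upper bound needs a new argument that drives a block of $k$ vacancies to the left edge of a window.

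\textbf{Lower bound.} The constraints satisfy $c^{\land}(x-1,x,\eta)\le c^{\lor}(x-1,x,\eta)$ for every $\eta$, since all $k$ sites being empty implies that at least one is. Hence every path legal for the $k$-East-and dynamics is also legal for the $k$-East-or dynamics. So $\eta\connected{}{\land}\eta^{0,-1}$ implies $\eta\connected{}{\lor}\eta^{0,-1}$. Proposition \ref{prop:qc_or} then gives non-ergodicity of the and-model for $q<\frac{1}{k+1}$, that is, $q_c^{\land}\ge\frac{1}{k+1}$.

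\textbf{Upper bound: structure.} Mirroring Section \ref{sec:east_or}, I will aim for the following claim. If $(k+1)V_L>L$, then inside $[1,L]$, with filled boundary on the right, one can reach a configuration $\eta'$ with $\eta'(1)=\dots=\eta'(k)=0$. Then $c^{\land}(-1,0,\eta')=1$. Reversing the path after the swap, exactly as in the proof for the or-model, gives $\eta\connected{}{\land}\eta^{-1,0}$. The law of large numbers finishes as before: for $q>\frac{1}{k+1}$ such an $L$ exists almost surely.

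\textbf{Upper bound: the main step.} This claim is the main obstacle, because the naive extremal argument breaks. In a configuration of maximal center of mass, every run of vacancies preceded by a particle has length at most $k$. That only forces the vacancy density to be at most $\frac{k}{k+1}$, which is not a contradiction. So a single particle cannot simply be pushed right.

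I would instead use the cooperative mechanism of the and-model. Consider a pattern $1\,0^{a}\,1\,0^{b}$ with $a+b\ge k$. Shifting the second particle to the left when $b\ge k$ lets the first run grow. Repeating this lets vacancies from several short runs be merged into a single run of length $k+1$. Such a run can then be transported leftwards by moving particles to the right across it, two runs combining whenever their total length permits.

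Concretely, I would prove by induction on the number of particles in $[1,L]$ a lemma of the following form. If some suffix $[m,L]$ satisfies $(k+1)V>\text{length}$, then a run of $k$ vacancies can be created at the left end of that suffix. The inductive step removes the leftmost particle: either it is already followed by $k$ vacancies and can be moved right, or one applies induction to the remaining segment, whose density condition is inherited after discarding a filled prefix. The counting identity $(k+1)V_l$ versus $l$, as in Lemma \ref{lem:or_move_0}, controls where a suitable suffix begins. I expect the precise bookkeeping of this merging step, and checking that the density condition is preserved under it, to be where most of the work lies.
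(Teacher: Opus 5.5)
The upper bound you are aiming for is false for $k\ge2$, so no bookkeeping can rescue the ``main step.'' The value $\frac{1}{k+1}$ in the displayed statement is a misprint. The paper's proof actually establishes $q_c^{\land}=\frac{k}{k+1}$: see the two corollaries in Section~\ref{sec:east_and}, which Part~3 of Theorem~\ref{thm:q_c} uses to get $\frac{1}{r+1}\le q_c\le\frac{r}{r+1}$. The two values agree only for $k=1$, where the and- and or-models coincide.

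Here is a concrete counterexample for $k\ge2$. Take $\eta=(1\,0^{k})^{m}$ on $[1,L]$ with $L=(k+1)m$. Then $(k+1)V_L=(k+1)km>L$. Yet $F_l=\lceil l/(k+1)\rceil$, so $(k+1)F_l\ge l$ for all $l\le L$.

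This prefix condition is preserved by every and-legal move in $[1,L]$. A move can only break it at $l$ by sending a particle across the edge $(l,l+1)$. That requires $l+1,\dots,l+k+1$ to be empty, with $l+k+1\le L$ because the boundary is filled. If the move broke the condition, then $(k+1)F_l<l+k+1$. Hence $(k+1)F_{l+k+1}=(k+1)F_l<l+k+1$, so the condition had already failed at $l+k+1$.

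The condition at $l=k$ forces a particle in $[1,k]$, so that block is never emptied. For $k=m=2$ you can check by hand that only $100100$, $101000$, $110000$ are reachable. Under $\mu_q$ the walk $F_l-\frac{l}{k+1}$ has drift $\frac{k}{k+1}-q$, which is positive for $q<\frac{k}{k+1}$. Combined with the same invariant, this shows the model is non-ergodic for every $q\in(\frac{1}{k+1},\frac{k}{k+1})$. So no merging of short vacancy runs can produce a $k$-block at the left end when every prefix has particle density at least $\frac{1}{k+1}$.

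Your lower bound via $c^{\land}\le c^{\lor}$ is valid but not sharp. The sharp bound uses the particle-count invariant above (the event $\mathcal{A}_L^{\land}$ in the paper), not the or-model's vacancy-count invariant.

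The observation you set aside is precisely the correct upper bound. Take a configuration of maximal centre of mass in $[1,L]$ with a particle in $[1,k]$. Every vacancy run following a particle has length at most $k$, and the initial run has length at most $k-1$, so $V_L\le kF_L+k-1$. Thus as soon as $(k+1)F_L+k\le L$, the extremal configuration has $[1,k]$ empty, and your reversal argument concludes. For $q>\frac{k}{k+1}$ such an $L$ exists almost surely. This is essentially Lemma~\ref{lem:and_move_0}, and it gives $q_c^{\land}\le\frac{k}{k+1}$. The density $\frac{k}{k+1}$ you ran into is the true threshold, not an artefact of the method.
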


The proof follows the same steps as for the East-or model; we will
explain it briefly without going too much into the details.

We start with the following notation for the number of filled sites:
\begin{equation}
F_{L}=F_{L}(\eta)=\sum_{x=1}^{L}\eta(x),\qquad\eta\in\Omega,L\in\N.\label{eq:F_L}
\end{equation}

\subsection{Upper bound}
\begin{lem}
\label{lem:and_move_0}Let $\eta\in\Omega$ and $L\in\N$ such that
$(k+1)F_{L}<L$. Assume $c^{\land}(-1,0,\eta)=0$. Then there exist
$x\in[2,L]$ such that $\eta(x)=0$, $\eta(x-1)=1$, and $c^{\land}(x-1,x,\eta)=1$.
\end{lem}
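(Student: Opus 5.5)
The plan is to mirror the proof of Lemma \ref{lem:or_move_0}, with the roles of particles and vacancies exchanged. In the East-or case we looked for two vacancies at distance at most $k$. Here we need a particle at $x-1$ immediately followed by a run of $k+1$ empty sites $x,x+1,\dots,x+k$, with all these sites in $[1,L]$. Such a pattern gives $\eta(x-1)=1$, $\eta(x)=0$ and $c^{\land}(x-1,x,\eta)=1$ directly.

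\emph{Step 1 (a particle exists near the origin).} Since $c^{\land}(-1,0,\eta)=0$, some site $p\in[1,k]$ is occupied. In particular $F_{L}\ge1$. Let $p_{1}<p_{2}<\dots<p_{F}$ be the occupied sites of $[1,L]$, where $F=F_{L}$ and $p_{1}\le k$.

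\emph{Step 2 (gap decomposition).} Write the vacancies of $[1,L]$ as a disjoint union of gaps: at most $p_{1}-1\le k-1$ empty sites before $p_{1}$; the gaps $g_{i}=p_{i+1}-p_{i}-1$ for $i<F$; and the final gap $g_{F}=L-p_{F}$. Counting gives
\[
\sum_{i=1}^{F}g_{i}=L-F-(p_{1}-1)\ge L-F-(k-1).
\]
If some gap $g_{i}\ge k+1$ with $i<F$, or $g_{F}\ge k+1$, set $x=p_{i}+1$. Then $x\ge2$, $\eta(x-1)=1$, and the $k+1$ sites $x,\dots,x+k$ are all empty and lie in $[1,L]$, which is exactly what the lemma asks for. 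As in Lemma \ref{lem:or_move_0}, we can take the leftmost such $x$.

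\emph{Step 3 (pigeonhole).} It remains to exclude the case where every gap is at most $k$. In that case $L-F-(p_{1}-1)\le kF$, that is,
\[
L\le(k+1)F_{L}+p_{1}-1.
\]
This is the step I expect to be the main obstacle. The hypothesis $(k+1)F_{L}<L$ only gives $L\ge(k+1)F_{L}+1$, so we must also control the slack $p_{1}-1$ coming from the vacancies to the left of the first particle.

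To do this I would not take $p_{1}$ to be an arbitrary occupied site of $[1,k]$. Instead I would use that these leftmost vacancies can be absorbed into the counting. Vacancies before $p_{1}$ have no particle to their left, so they cannot form the required pattern. This suggests replacing $L$ by $L'=L-(p_{1}-1)$ and the interval $[1,L]$ by $[p_{1},L]$. One then needs $(k+1)F<L'$, which must be squeezed out of the strict inequality $(k+1)F_{L}<L$ together with $p_{1}\le k$.

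If this bookkeeping does not close with the hypothesis exactly as stated, I would fall back on the reading suggested by how the lemma is used in the upper bound. There the relevant quantity is the count over the whole window, and the conclusion is applied only to find a rightward move inside the window. In that reading the last gap $g_{F}$ may be allowed to reach the boundary, and only $x\in[2,L]$ is needed.
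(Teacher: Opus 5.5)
Your Step 3 is not bookkeeping that can be tightened: the inequality $(k+1)F_L<L-(p_1-1)$ does not follow from $(k+1)F_L<L$ and $p_1\le k$. In fact the lemma as stated is false for every $k\ge2$. Take $k=2$, $L=4$ and $\eta(1),\dots,\eta(5)=0,1,0,0,1$. Then:
\begin{itemize}
\item $F_4=1$, so $(k+1)F_L=3<4$;
\item $c^{\land}(-1,0,\eta)=0$ because $\eta(2)=1$;
\item the only $x\in[2,4]$ with $\eta(x-1)=1$ and $\eta(x)=0$ is $x=3$, and $c^{\land}(2,3,\eta)=0$ because $\eta(5)=1$.
\end{itemize}
The general version is: $k-1$ vacancies, then $F$ blocks each consisting of one particle followed by exactly $k$ vacancies, closed by $\eta(L+1)=1$. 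This gives $L=(k+1)F+k-1>(k+1)F$ with no admissible $x$, so your own bound $L\le(k+1)F_L+p_1-1$ is attained.

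Your fallback does not rescue this. The constraint $c^{\land}(x-1,x,\eta)$ reads $\eta(x+1),\dots,\eta(x+k)$, and for $x$ just after the last particle these sites lie beyond $L$. There $\eta$ is arbitrary, and it is occupied under the paper's filled-boundary convention. Letting the last gap ``reach the boundary'' therefore proves a different statement.

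The paper's proof uses the same idea as yours: take the leftmost completely empty block $[x,x+k]$, with $x>1$ forced by the particle in $[1,k]$. It has exactly the same hole. From ``at most $F_L-1$ filled sites in $[k+1,L]$'' one gets an empty block of length $k+1$ only if $L-k-(F_L-1)>kF_L$, i.e.\ $L\ge(k+1)F_L+k$. The stated hypothesis implies this only when $k=1$.

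The correct statement assumes $(k+1)F_L+k\le L$. Under that hypothesis your Steps 1--3 close at once, since all gaps $\le k$ would force $L\le(k+1)F_L+k-1$, and your proof then coincides with the paper's.

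Nothing downstream is lost. For $q>\frac{k}{k+1}$ the law of large numbers gives $F_L/L\to1-q<\frac{1}{k+1}$, so $(k+1)F_L+k\le L$ holds for all large $L$ almost surely. The definition of $l(\eta)$ in Section~\ref{sec:proof_D} can be adjusted the same way, and the Chernoff estimate is unaffected.
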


\begin{proof}
The proof follows the same lines as Lemma \ref{lem:or_move_0}: $c^{\land}(-1,0,\eta)=0$
implies that there is at least one filled site in $[1,k]$, hence
there are at most $F_{L}(\eta)-1$ filled sites in $[k+1,L]$. By
the pigeon hole principle there must be an interval of length $k+1$
which is completely empty, and we take the leftmost one, $[x,x+k]$.
By assumption $x>1$, hence $\eta(x-1)=1$, $\eta(x)=0$, and $c^{\land}(x-1,x,\eta)=1$.
\end{proof}
\begin{lem}
Let $\eta\in\Omega$ and $L\in\N$ such that $(k+1)F_{L}<L$. Then
$\eta\connected{}{\land}\eta^{0,-1}$.
\end{lem}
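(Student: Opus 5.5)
We mirror the proof of the corresponding lemma for the $k$-East-or model, using Lemma \ref{lem:and_move_0} in place of Lemma \ref{lem:or_move_0}. The plan has three steps: maximize the center of mass over configurations reachable inside $[1,L]$, show that the maximizer allows the exchange of $-1$ and $0$, and then undo the path.

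First we fix $\eta$ and $L$ with $(k+1)F_{L}<L$. We consider the finite set of configurations $\eta''$ with $\eta''\connected L{\land}\eta$. These are configurations obtained from $\eta$ by legal exchanges inside $[1,L]$, with $\eta$ kept fixed outside $[1,L]$; the constraint inside the segment is evaluated with filled boundary. Among them we choose $\eta'$ maximizing $\text{com}(\eta')=\sum_{x=1}^{L}x\eta'(x)$. Exchanges inside $[1,L]$ do not change $F_{L}$, so $(k+1)F_{L}(\eta')<L$ still holds.

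Next we check that $c^{\land}(-1,0,\eta')=1$. Suppose not. Then Lemma \ref{lem:and_move_0} gives $x\in[2,L]$ with $\eta'(x-1)=1$, $\eta'(x)=0$ and $c^{\land}(x-1,x,\eta')=1$. The main point to verify is that this transition is legal for the segment dynamics with filled boundary, i.e.\ that $x+k\le L$. This holds because the lemma's proof produces an empty interval $[x,x+k]$ contained in $[1,L]$, so the constraint depends only on sites inside the segment. Moreover $c^{\land}(-1,0,\cdot)$ depends only on sites $1,\dots,k$, which lie in $[1,L]$; this uses $k<L$, which follows from $(k+1)F_{L}<L$ only if $F_{L}\ge1$. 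In the case $F_{L}=0$ the segment is entirely empty and the conclusion is immediate. Performing this transition moves a particle from $x-1$ to $x$ and strictly increases $\text{com}$, which contradicts maximality. Hence $c^{\land}(-1,0,\eta')=1$.

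Finally we construct the path. We take a legal path $\eta=\eta_{0},\dots,\eta_{n}=\eta'$ inside $[1,L]$ and append the exchange $\eta_{n+1}=\eta_{n}^{-1,0}$, which is legal by the previous step. We then follow the path in reverse order, setting $\eta_{n+1+i}=\eta_{n-i}^{-1,0}$. These reversed steps are legal because every transition inside $[1,L]$ has a constraint depending only on sites in $[1,L]$, and so is unaffected by the occupations at $-1$ and $0$. The path ends at $\eta^{-1,0}$, which gives $\eta\connected{}{\land}\eta^{0,-1}$. The only delicate point in the whole argument is the boundary check in the second step, namely that the transition supplied by Lemma \ref{lem:and_move_0} stays within $[1,L]$.
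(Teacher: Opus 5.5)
Your three steps are the paper's own argument: maximize the center of mass over configurations reachable in $[1,L]$, apply Lemma~\ref{lem:and_move_0}, then unwind the path. However, the step you single out as delicate is exactly where it breaks. The paper's proof breaks at the same place, because the pigeonhole step in the proof of Lemma~\ref{lem:and_move_0} is wrong for $k\ge2$.

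If some site of $[1,k]$ is occupied, the $F_L$ occupied sites cut $[1,L]$ into at most $F_L+1$ maximal empty runs, the first of length at most $k-1$. So an empty run of length $k+1$ is guaranteed only when $L-F_L>(k-1)+kF_L$, i.e. $L\ge(k+1)F_L+k$. For $k\ge2$ this is strictly stronger than $(k+1)F_L<L$.

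Concretely, take $k=2$, $L=4$ and $\eta=(0,1,0,0)$ on $[1,4]$. The configurations reachable in $[1,4]$ are $(0,1,0,0)$ and $(1,0,0,0)$. The maximizer $(0,1,0,0)$ contains no empty block $[x,x+2]\subseteq[1,4]$. Its only rightward move, across $(2,3)$, needs $\eta(5)=0$, which the filled boundary forbids. Yet $c^{\land}(-1,0,\cdot)=0$ there, so your second step fails.

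This is not just a gap in the argument: the statement itself is false for $k\ge2$. Take $k=2$, $L=4$ and $\eta$ occupied exactly on $(-\infty,-1]\cup\{2\}\cup[5,\infty)$, so $3F_4=3<4$.
\begin{itemize}
\item An edge $(x-1,x)$ with $x\ge3$ needs $\eta(x+2)=0$ with $x+2\ge5$. Only such edges can modify $[5,\infty)$, so that half-line stays full and none of these edges ever opens.
\item Hence sites $3,4$ stay empty, and $(-\infty,2]$ always holds exactly two vacancies.
\item An edge with $x\le0$ opens only when these two vacancies sit at $x+1,x+2$, and then it exchanges two particles.
\item So only the exchanges across $(0,1)$ and $(1,2)$ are effective, and the occupation outside $\{0,1,2\}$ never changes.
\end{itemize}
Therefore $\eta^{-1,0}$, which is empty at $-1$, is unreachable.

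Your remark that the case $F_L=0$ is immediate fails in the same way when $L<k$. For any $k\ge2$, $L=1$ and $\eta$ empty exactly on $\{0,1\}$ satisfy the hypothesis, but $\eta$ is frozen. Indeed, an open edge needs $k\ge2$ consecutive vacancies to its right, which forces $k=2$ and the edge $(-2,-1)$, and that edge joins two particles.

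The lemma does hold, with exactly your proof, under the hypothesis $L\ge(k+1)F_L+k$; this coincides with the stated one for $k=1$. Then $L\ge k$, and the count above produces an empty run $[x,x+k]\subseteq[1,L]$ with $x\ge2$ and $\eta'(x-1)=1$. Your first and third steps are unchanged. The corollary $q_c^{\land}\le\frac{k}{k+1}$ survives, since for $q>\frac{k}{k+1}$ the law of large numbers still gives such an $L$ almost surely.
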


\begin{proof}
As for the East-or model, we may assume $\eta$ to have maximal center
of mass among all $\eta'\connected L{\land}\eta$. Then there is no
legal transition moving a particle to the right, and by Lemma \ref{lem:and_move_0}
$c^{\land}(-1,0,\eta)=1$, which proves the result.
\end{proof}
\begin{cor}
$q_{c}^{\land}\le\frac{k}{k+1}$.
\end{cor}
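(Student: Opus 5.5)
The plan follows the corresponding corollary for the East-or model: combine the law of large numbers with the previous lemma. Fix $q>\frac{k}{k+1}$. Under $\mu_{q}$ the number of filled sites $F_{L}=\sum_{x=1}^{L}\eta(x)$ has distribution $\text{Bin}(L,1-q)$. Since $q>\frac{k}{k+1}$, its mean satisfies $(1-q)<\frac{1}{k+1}$. By the (weak) law of large numbers,
\[
\mu_{q}\left[(k+1)F_{L}\ge L\right]\xrightarrow{L\to\infty}0 .
\]
Hence, for every $\varepsilon>0$ there is some $L$ with $\mu_{q}[(k+1)F_{L}<L]>1-\varepsilon$. It follows that $\mu_{q}$-almost surely there exists $L\in\N$ with $(k+1)F_{L}<L$. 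The strong law gives this even more directly: $F_{L}/L\to1-q<\frac{1}{k+1}$ almost surely, so the inequality holds for all large $L$.

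For such an $\eta$ and $L$, the previous lemma gives $\eta\connected{}{\land}\eta^{0,-1}$. This is the exchange across the edge $\{-1,0\}$. Translation invariance of $\mu_{q}$ and of the constraint then transfers the statement to the edge $\{0,1\}$ required in Definition \ref{def:qc}: apply the argument to $\tau_{1}\eta$, which has the same law. We conclude that $\mu_{q}$-almost surely $\eta\connected{}{\land}\eta^{0,1}$, so the model is ergodic at every $q>\frac{k}{k+1}$, and therefore $q_{c}^{\land}\le\frac{k}{k+1}$.

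There is essentially no obstacle here. The only points to check are the direction of the inequality, which is now on filled sites rather than vacancies so the threshold becomes $1-q<\frac{1}{k+1}$, and the edge shift by translation invariance. The matching lower bound, presumably stated with threshold $\frac{k}{k+1}$ despite the proposition's $\frac{1}{k+1}$, would use the event $\{(k+1)F_{l}\ge l\ \forall l\}$ together with an analogue of Lemma \ref{lem:leaving _A_or}, but that is not needed for this corollary.
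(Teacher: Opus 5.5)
Your argument is the paper's: the law of large numbers for $F_L\sim\text{Bin}(L,1-q)$, then the preceding lemma. You correctly use mean $L(1-q)$ where the paper writes $Lq$, and you make explicit the translation-invariance step from the edge $\{-1,0\}$ to $\{0,1\}$, which the paper leaves implicit. One caveat, shared equally by the paper's proof: for $k\ge 2$ the preceding lemma (like Lemma \ref{lem:and_move_0}) is false under the bare hypothesis $(k+1)F_L<L$ (take $k=2$, $L=4$ and vacancies exactly at $\{0,1,3,4\}$: the only reachable vacancy sets are $\{0,1,3,4\}$, $\{0,2,3,4\}$, $\{1,2,3,4\}$, so $\eta$ is not connected to $\eta^{-1,0}$), and its pigeonhole step only works once $(k+1)F_L\le L-k$; your strong-law observation $(k+1)F_L/L\to(k+1)(1-q)<1$ supplies this margin for all large $L$, so the corollary still stands.
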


\begin{proof}
$F_{L}$ is binomial with mean $Lq$, so for $q>\frac{k}{k+1}$, 
\[
\mu\left[F_{L}\ge\frac{L}{k+1}\right]\xrightarrow{L\to\infty}0.
\]
Then there exists almost surely $L\in\N$ such that $(k+1)F_{L}<L$,
and we conclude by the previous lemma.
\end{proof}

\subsection{Lower bound }

For $L\in\N$, define the event 
\begin{equation}
\mathcal{A}_{L}^{\land}=\left\{ (k+1)F_{l}\ge l,\quad\forall l\le L\right\} .\label{eq:def_A_and}
\end{equation}

\begin{lem}
\label{lem:leaving _A_and}Let $\eta\in\mathcal{A}_{L}^{\land}$ and
$\eta'\connected L{\land}\eta$. Then $\eta'\in\mathcal{A}_{L}^{\land}$.
\end{lem}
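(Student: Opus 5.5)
It suffices, as in Lemma \ref{lem:leaving _A_or}, to treat a single legal transition: assume $\eta'=\eta^{x-1,x}$ with $x-1,x\in[1,L]$ and $c^{\land}_{[1,L]}(x-1,x,\eta)=1$. The general case then follows by induction along the legal path. Suppose for contradiction that $\eta'\notin\mathcal{A}_{L}^{\land}$. Then there is some $l\le L$ with
\[
(k+1)F_{l}(\eta)\ge l\quad\text{but}\quad(k+1)F_{l}(\eta')<l .
\]
The exchange of $x-1$ and $x$ changes only $F_{x-1}$, and it changes it by exactly one. For $F_{l}$ to decrease we therefore need
\[
l=x-1,\qquad\eta(x-1)=1,\qquad\eta(x)=0,\qquad F_{l}(\eta')=F_{l}(\eta)-1 .
\]
Substituting this into the two inequalities gives $l\le(k+1)F_{l}(\eta)<l+k+1$.

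Now use the constraint. With filled boundary outside $[1,L]$, the condition $c^{\land}_{[1,L]}(x-1,x,\eta)=1$ forces $x+k\le L$ and $\eta(x+1)=\dots=\eta(x+k)=0$. Since $\eta(x)=0$ as well, the whole interval $[x,x+k]=[l+1,l+k+1]$ is empty, so
\[
F_{l+k+1}(\eta)=F_{l}(\eta).
\]
Because $l+k+1=x+k\le L$, we may apply the defining condition of $\mathcal{A}_{L}^{\land}$ at $l+k+1$. This gives $(k+1)F_{l}(\eta)=(k+1)F_{l+k+1}(\eta)\ge l+k+1$, which contradicts the strict upper bound $(k+1)F_{l}(\eta)<l+k+1$ obtained above. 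Hence $\eta'\in\mathcal{A}_{L}^{\land}$.

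There is little real difficulty; the one point needing care is the boundary. We must check that the emptied block $[x,x+k]$ lies inside $[1,L]$, so that the event can be evaluated at $l+k+1$. This is exactly where the filled boundary condition enters: a transition near the right edge $L$ is never legal.
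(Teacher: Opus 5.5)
Your proof is correct and follows the paper's argument. Both reduce to a single rightward jump at $l=x-1$, derive $l\le(k+1)F_l(\eta)<l+k+1$, and use that $[x,x+k]$ is empty to contradict the defining condition of $\mathcal{A}_L^{\land}$ at $l+k+1$. Your explicit check that the filled boundary forces $x+k\le L$, so that this condition can be invoked, is a step the paper leaves implicit.
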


\begin{proof}
We assume by contradiction and without loss of generality that $\eta'=\eta^{x-1,x}\notin\mathcal{A}_{L}^{\land}$
for $x$ such that $c^{\land}(x-1,x,\eta)=1$. As for the East-or
model, this means that $\eta(x-1)=1,\eta(x)=0,$ and with $l=x-1$
\[
l\le(k+1)F_{l}<l+k+1.
\]

Since $c^{\land}(x-1,x,\eta)=1$, the segment $[x+1,x+k]$ is completely
empty, hence 
\[
(k+1)F_{l+k+1}=(k+1)F_{l}<l+k+1
\]
contradicting $\eta\in\mathcal{A}_{L}^{\land}$.
\end{proof}
\begin{cor}
$q_{c}^{\land}\ge\frac{k}{k+1}$.
\end{cor}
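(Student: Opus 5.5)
We mirror the proof of the lower bound for the East-or model. Fix $q<\frac{k}{k+1}$, so the density of particles is $1-q>\frac{1}{k+1}$. The first step is to show that
\[
\mu_q\left(\mathcal{A}_{L}^{\land}\text{ for all }L\in\N\right)=\mu_q\left(F_{l}-\frac{l}{k+1}\ge0,\quad\forall l\in\N\right)>0 .
\]
Indeed, $F_{l}-\frac{l}{k+1}$ is a random walk with i.i.d.\ increments $\eta(l)-\frac{1}{k+1}$. These increments have mean $1-q-\frac{1}{k+1}>0$, so the walk drifts to $+\infty$ and has positive probability of never becoming negative. This is the standard fact already used in the East-or case. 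The first step $F_1-\frac{1}{k+1}$ is nonnegative exactly when $\eta(1)=1$, which has positive probability, and conditioning on enough initial particles handles any concern about the start.

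Next, fix $\eta\in\bigcap_{L}\mathcal{A}_{L}^{\land}$ and suppose for contradiction that $\eta\connected{}{\land}\eta^{-1,0}$. The legal path is finite, and the constraint on the edge $(-1,0)$ looks only at $[1,k]$. Before the first use of the edge $(-1,0)$, the path only modifies sites in $[1,L]$ for some large $L$, and the exchanges at or to the left of $0$ do not affect constraints in $[1,L]$. So, as in the East-or argument, there is $\eta'$ with $\eta\connected L{\land}\eta'$ and $c^{\land}(-1,0,\eta')=1$.

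By Lemma~\ref{lem:leaving _A_and}, $\eta'\in\mathcal{A}_{L}^{\land}$. Taking $l=k$ gives $(k+1)F_{k}(\eta')\ge k>0$, so $F_k(\eta')\ge1$ and $[1,k]$ contains a particle. This contradicts $c^{\land}(-1,0,\eta')=1$, which requires $[1,k]$ to be empty. Hence on a positive-probability event $\eta$ is not connected to $\eta^{-1,0}$. By translation invariance the same conclusion holds for $\eta^{0,1}$, so the model is not ergodic at $q$, and therefore $q_c^{\land}\ge\frac{k}{k+1}$.

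The main obstacle is the reduction to a path confined to $[1,L]$. One must check that moves on edges left of $0$ occurring before the first $(-1,0)$ exchange cannot help, which holds because constraints look only east. One must also check that restricting to $[1,L]$ with filled boundary is no less permissive than the true dynamics. Filled boundary could in principle only block moves near $L$, so we choose $L$ beyond the whole support of the path; then the constraints inside coincide with the true ones up to sites $\le L+k$, which we take $L$ large enough to include.
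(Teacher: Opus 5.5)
Your argument follows the paper's route: the same event $\bigcap_L\mathcal{A}_L^{\land}$, the same invariance lemma, and the same contradiction from $F_k\ge 1$. However, the reduction step has a genuine gap. The paper's one-line claim ``all configurations connected to $\eta$ are also in $\bigcap_L\mathcal{A}_L^{\land}$'' glosses over the same point.

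Before the first use of the edge $(-1,0)$, the path may use the edge $(0,1)$. That exchange changes the occupation of site $1$ but is not an exchange inside $[1,L]$. So the restriction of the path to $[1,L]$ is not a legal path there, and the invariance lemma says nothing about it. Your ``main obstacle'' paragraph covers edges strictly left of $0$, but not this one.

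The edge is not harmless. $c^{\land}(0,1,\eta)=1$ only needs $[2,k+1]$ to be empty, which is compatible with $\eta\in\bigcap_L\mathcal{A}_L^{\land}$. For example, take $\eta(1)=1$, $[2,k+1]$ empty, $\eta(k+2)=1$, followed by a suitable tail; this is a positive-probability set. If moreover $\eta(0)=0$, one legal move carries the particle from $1$ to $0$. The new configuration has $F_1=0$ and $[1,k]$ empty, so $c^{\land}(-1,0,\cdot)=1$ after a single step. Hence the intermediate statement your contradiction would prove is false: it is not true that $(-1,0)$ never opens along a legal path started in $\bigcap_L\mathcal{A}_L^{\land}$.

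The conclusion itself is true, and the repair is short. Work instead on the event $\{(k+1)F_l\ge l+1\ \forall l\ge 1\}$, which has positive probability by the same drift argument.
\begin{itemize}
\item The proof of the invariance lemma goes through verbatim with the offset $+1$. A forbidden exchange at $(x-1,x)$ would give $(k+1)F_{x+k}=(k+1)F_{x-1}<x+k+1$.
\item The event forces $F_{k+1}\ge 2$ and $F_k\ge 1$, so both $(0,1)$ and $(-1,0)$ stay closed.
\item Moves on edges left of $-1$ do not touch $[0,\infty)$.
\end{itemize}
By induction along any legal path, the event persists and site $0$ is frozen. Hence $\eta$ is not connected to $\eta^{-1,0}$ whenever $\eta(-1)\neq\eta(0)$. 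This is an independent event of probability $2q(1-q)$, and you should state it explicitly, since otherwise $\eta^{-1,0}=\eta$ is trivially reachable.

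Alternatively, use the symmetry of connectivity and the invariance of the event under swapping $-1,0$ to assume $\eta(0)=1$ and $\eta(-1)=0$. Then run the invariance argument on $[0,\infty)$ with $G_l=\sum_{x=0}^{l-1}\eta(x)$, which satisfies $(k+1)G_l\ge l+k$.
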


\begin{proof}
The process $F_{l}-\frac{l}{k+1}$ is a random walk with independent
increments of mean $1-q-\frac{1}{k+1}=\frac{k}{k+1}-q$. Hence for
$q<\frac{k}{k+1}$ there is positive probability that $F_{l}-\frac{l}{k+1}\le0$
for all $l$, i.e., $\mu\left[\cap_{L}\mathcal{A}_{L}^{\land}\right]>0$.

For $\eta\in\cap_{L}\mathcal{A}_{L}^{\land}$ we know that $(k+1)F_{k}\ge k$,
so $F_{k}\ge1$ meaning that $c^{\land}(-1,0,\eta)=0$. By the previous
lemma all configurations connected to $\eta$ are also in $\cap_{L}\mathcal{A}_{L}^{\land}$,
so $\eta$ cannot be connected to $\eta^{-1,0}$ (unless $\eta(-1)=\eta(0)$).
\end{proof}

\section{\label{sec:proof_irreducibility}Proof of Theorem \ref{thm:q_c}}

\subsection{\label{subsec:noncooperative}Part 1}

Assume $c(-1,0,\bone_{[-\infty,-1]})\neq0$ and $c(-1,0,\bone_{[0,\infty]})\neq0$.
Recall the range $r$ in Definition \ref{def:KCLG}. We follow the
steps of \cite{BertiniToninelli}.
\begin{lem}
\label{lem:mobile_cluster}The set $[-r,r]$ is a \emph{mobile cluster},
i.e.,
\begin{enumerate}
\item Let $\eta$ such that $\eta(x)=0$ for all $x\in x_{0}+[-r,r]$. Then
there exist $\eta'\connected{}{}\eta$ such that $\eta'(x)=0$ for
all $x\in x_{0}+1+[-r,r]$.
\item Let $\eta$ such that $\eta(x)=0$ for all $x\in x_{0}+[-r,r]$. Then
there exist $\eta'\connected{}{}\eta$ such that $\eta'(x)=0$ for
all $x\in x_{0}-1+[-r,r]$.
\end{enumerate}
\end{lem}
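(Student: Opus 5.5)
The plan is to translate the two hypotheses of Part~1, via monotonicity, into two \emph{local} sufficient conditions for an edge to be legal. Then the empty block is shifted by one step, by moving a single particle through it.

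\emph{Step 1: two local sufficient conditions.} I take the range convention to mean that $c(x-1,x,\eta)$ depends only on $\eta$ in a window of radius $r$ around the edge, so that the sites it sees, other than $x-1,x$, lie in $[x-1-r,x-2]\cup[x+1,x+r]$. If the convention in Definition~\ref{def:KCLG} is slightly different, one enlarges $r$ by one and nothing below changes. By property 3, $c(x-1,x,\eta)$ does not depend on $\eta(x-1),\eta(x)$.

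Consider $\eta$ with $\eta\equiv0$ on $[x+1,x+r]$. On the window, $\eta\le\tau_{-x}\bone_{(-\infty,-1]}$, the translate of the reference configuration whose edge $(-1,0)$ becomes $(x-1,x)$, except possibly at $x-1,x$, which do not matter. So monotonicity (property 5), translation invariance (property 2) and the hypothesis $c(-1,0,\bone_{(-\infty,-1]})\neq0$ give $c(x-1,x,\eta)\neq0$. In the same way, the hypothesis $c(-1,0,\bone_{[0,\infty)})\neq0$ gives $c(x-1,x,\eta)\neq0$ whenever $\eta\equiv0$ on $[x-1-r,x-2]$. Hence an edge is legal as soon as its $r$ right neighbours are empty, or its $r$ left neighbours are empty.

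\emph{Step 2: shifting the block, part (1).} Let $\eta\equiv0$ on $x_0+[-r,r]$.
\begin{itemize}
\item If $\eta(x_0+r+1)=0$ there is nothing to do.
\item Otherwise, swap the edge $(x_0+r,x_0+r+1)$. Its left neighbours $[x_0,x_0+r-1]$ are empty, so the swap is legal by Step 1. The particle is now at $x_0+r$ and site $x_0+r+1$ is empty.
\end{itemize}
Next, move this particle leftwards step by step, using edges $(y-1,y)$ for $y=x_0+r,x_0+r-1,\dots,x_0-r+1$, until it reaches $x_0-r$. At each step all sites of $[x_0-r,x_0+r+1]$ other than $y$ are empty.
\begin{itemize}
\item For $y\ge x_0+1$, the left neighbourhood $[y-r-1,y-2]\subset[x_0-r,x_0+r-1]$ does not contain $y$, so it is empty and the move is legal.
\item For $y\le x_0$, the right neighbourhood $[y+1,y+r]\subset[x_0-r+2,x_0+r]$ is empty, so the move is legal.
\end{itemize}
The final configuration is empty on $[x_0-r+1,x_0+r+1]=x_0+1+[-r,r]$ and is connected to $\eta$, which proves part (1). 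Part (2) is the mirror image: bring a vacancy to $x_0-r-1$, using the right-neighbour condition for the first swap, then push the particle rightwards to $x_0+r$, switching between the two conditions at $x_0$.

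\emph{Main obstacle.} The only delicate point is Step 1. One must check that the window seen by $c(x-1,x,\cdot)$ is exactly covered by "the $r$ sites on one side", given the paper's slightly informal range convention. One must also check that the reference configurations $\bone_{(-\infty,-1]}$ and $\bone_{[0,\infty)}$ dominate the current configuration on the relevant half-window; here property 3 is used to ignore the two sites of the edge. Choosing the mobile cluster as $[-r,r]$ rather than a larger block is what makes the two half-windows fit inside it, possibly after the harmless enlargement of $r$ mentioned above. The rest is the explicit sequence of swaps in Step 2.
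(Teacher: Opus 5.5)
Your proof is correct and is essentially the paper's argument. Monotonicity and translation invariance turn the two hypotheses into ``an edge is legal if the $r$ sites on one side of it are empty''. Then the particle at $x_0+r+1$ is pulled into the cluster and pushed left to $x_0-r$, using the left-empty condition on the right half and the right-empty condition on the left half, and part (2) is the mirror image. Your hedge about enlarging $r$ is not needed: with the paper's convention, $c(x-1,x,\cdot)$ depends only on $x-1+[-r,r]\subset[x-1-r,x+r]$, so the lemma holds with the stated $r$.
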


\begin{proof}
Without loss of generality we will focus on point (1) and $x_{0}=0$.
Fix $\eta$ such that $\eta(x)=0$ for all $x\in[-r,r]$. We will
show how to move the cluster to the right.

Note first that $c(r,r+1,\eta)\neq0$: since the range is $r$, we
may change the occupation to the right of $0$ without changing $c(r,r+1,\eta)$.
In particular we are allowed to empty all sites to the right of $0$,
obtaining $c(r,r+1,\eta)=c(r,r+1,\tilde{\eta})$ for 
\[
\tilde{\eta}(x)=\min(\eta(x),\bone_{[r,\infty]}(x)).
\]
By monotonicity of the constraint and translation invariance
\[
c(r,r+1,\eta)\ge c(r,r+1,\bone_{[r,\infty]}(x))\neq0.
\]

We then define $\eta_{1}=\eta^{r,r+1}$. By the same reasoning, letting
$\eta_{i+1}=\eta_{i}^{r-i,r+1-i}$ for $i=1,\dots,r$, we construct
a path connecting $\eta$ to $\eta_{r+1}$, where $\eta_{r+1}(x)=0$
for $x\in[-r-1]\cup[1,r+1]$.

The next step is to continue the path using the vacancies to the right:
we note that $c(-1,0,\eta_{r+1})=c(-1,0,\tilde{\eta}_{r+1})$ for
\[
\tilde{\eta}_{r+1}(x)=\min(\eta(x),\bone_{[-\infty,1]}),
\]
hence by monotonicity and translation invariance $c(-1,0,\eta_{r+1})\neq0$.
Then we may add $\eta_{r+2}=\eta_{r+1}^{-1,0}$, and similarly $\eta_{i+1}=\eta_{i}^{r-i,r+1-i}$
for $i=r+1,\dots,2r$. We end up with a configuration $\eta_{2r+1}$
which is empty on $1+[-r,r]$.
\end{proof}
Models with a mobile cluster are called \emph{noncooperative}, and
were studied in detail in \cite{Shapira2024Noncooperative}. In particular,
it is not difficult to see that noncooperative models are ergodic
for all $q>0$: with probability $1$ there is some empty mobile cluster
to the right of the origin, i.e., there exists $x_{0}>r$ such that
$\eta(x)=0$ for all $x\in x_{0}+[-r,r]$. We may then move it step
by step to the left, until reaching a configuration $\eta'$ where
$[1,2r+2]$ is completely empty. In this configuration we are allowed
to exchange $-1$ and $0$, and then fold the transitions back to
$\eta^{-1,0}$. \qed

\subsection{Part 2}

Assume $c(-1,0,\bone_{[-\infty,-1]})=0$ and $c(-1,0,\bone_{[0,\infty]})=0$.
With positive probability $\eta(x)=1$ for all $x\in[1,r+1]$ ($r$
being the range of the constraint). We claim that $[1,r+1]$ remains
occupied forever: assume that some path empties a site of $[1,r+1]$.
Considering the first step at which this happens we may assume that
there exists $x$ such that $\eta'=\eta^{x-1,x}$, $c(x-1,x,\eta)\neq0$,
and $\eta'$ contains an empty site in $[1,r+1]$. This can only happen
if $x\in\{1,r+2\}$.

Assume $x=1$ (the case $x=r+2$ is analogous). Since the range of
the constraint is $r$, $c(0,1,\eta)=c(0,1,\max(\eta,\bone_{(-\infty,0]}))\le c(0,1,\bone_{(-\infty,0]})=0$,
reaching a contradiction. \qed

\subsection{Part 3}

The proof of this part is based on sections \ref{sec:east_or} and
\ref{sec:east_and}: consider a one-sided model, and assume without
loss of generality $c(-1,0,\bone_{(-\infty,-1]})\neq0$ and $c(-1,0,\bone_{[0,\infty)})=0$.

Since $c(-1,0,\bone_{(-\infty,-1]})\neq0$ and the model has finite
range $r$, for any $x\in\Z$ and $\eta\in\Omega$, if $\eta(x+1)=\dots=\eta(x+r)=0$
then $c(x-1,x,\eta)\neq0$. Therefore, legal transitions for the $r$-East-and
model are legal for our model, so taking $k=r$:
\begin{equation}
\eta\connected{}{\land}\eta'\Rightarrow\eta\connected{}c\eta'.
\end{equation}
This implies $q_{c}\le q_{c}^{\land}.$

In the other direction, $c(-1,0,\bone_{[0,\infty)})=0$ means that
for any $x$ and $\eta$, if $c(x-1,x,\eta)\neq0$ there must be a
vacancy somewhere to the right of $x$, and by finiteness of the range
there must be a vacancy in $[x+1,x+r]$. That is, any legal transition
is also legal for the $r$-East-or model, so if we take $k=r$:
\begin{equation}
\eta\connected{}c\eta'\Rightarrow\eta\connected{}{\lor}\eta.
\end{equation}
Therefore $q_{c}\ge q_{c}^{\lor}.$

By Propositions \ref{prop:qc_or} and \ref{prop:qc_and}, 
\begin{equation}
\frac{1}{r+1}\le q_{c}\le\frac{r}{r+1},
\end{equation}
and in particular $q_{c}$ is non-trivial.\qed

\section{\label{sec:proof_D}Proof of Theorem \ref{thm:D}}

From now on we fix $q>q_{0}$ (for $q_{0}$ to be determined), and
omit the explicit $q$-dependence from the notation.
\begin{observation}
\label{obs:D_and}As explained in the proof of part 3 of Theorem \ref{thm:q_c},
any one-sided model dominates (up to a factor) the $k$-East-and model
for some choice of $k$. By the Green-Kubo formula (see, e.g., \cite[Proposition II.2.2]{Spohn2012IPS})
we may therefore restrict ourselves to proving the theorem for the
$k$-East-and model.
\end{observation}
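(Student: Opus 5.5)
The plan is to combine a pointwise comparison of rates with the monotonicity of the variational (Green--Kubo) formula for the diffusion coefficient.

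First I would reduce to the case $c(-1,0,\bone_{(-\infty,-1]})\neq0$. The reflection $x\mapsto-x$ sends a KCLG to another KCLG, exchanges the two one-sided conditions of Theorem \ref{thm:q_c}, and preserves $\mu_q$. Since the diffusion coefficient is defined through a symmetric quadratic form, it is unchanged by this reflection. So we may assume $a:=c(-1,0,\bone_{(-\infty,-1]})>0$.

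Next I would prove the pointwise inequality
\[
c(x-1,x,\eta)\ge a\,c^{\land}(x-1,x,\eta)\qquad\text{for all }x,\eta,
\]
with $k=r$. If $c^{\land}(x-1,x,\eta)=0$ there is nothing to prove. Otherwise $\eta$ vanishes on $[x+1,x+r]$, and the argument goes in four steps:
\begin{enumerate}
\item By finite range, $c(x-1,x,\eta)$ is unchanged if we set $\eta\equiv0$ on $(x+r,\infty)$.
\item By property (3) of Definition \ref{def:KCLG}, the value does not depend on the occupations at $x-1$ and $x$.
\item The modified configuration is then pointwise below the translate of $\bone_{(-\infty,-1]}$ to $\bone_{(-\infty,x-1]}$, after adjusting the irrelevant sites $x-1,x$. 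Monotonicity (property (5)) therefore gives $c(x-1,x,\eta)\ge c(x-1,x,\bone_{(-\infty,x-1]})$.
\item Translation invariance (property (2)) identifies the last quantity with $a$.
\end{enumerate}
This is exactly the argument already used in Part 3 of the proof of Theorem \ref{thm:q_c}, now recording the constant rather than only positivity.

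Finally I would invoke the variational characterization of the diffusion coefficient for reversible gradient-free lattice gases \cite[Proposition II.2.2]{Spohn2012IPS}:
\[
D(c)=\frac{1}{2\chi}\inf_{f}\E_{\mu_q}\Big[c(0,1,\eta)\Big(\eta(1)-\eta(0)+\sum_{z\in\Z}\big(\tau_z f(\eta^{0,1})-\tau_z f(\eta)\big)\Big)^2\Big],
\]
where $\chi=q(1-q)$ and the infimum runs over local functions $f$. The integrand is $c(0,1,\eta)$ times a nonnegative quantity. So for each fixed $f$ the functional is monotone in $c$, and the pointwise bound passes through the infimum to give $D(c)\ge a\,D(c^{\land})$. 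Hence strict positivity of $D$ for the $r$-East-and model at $q>q_0$ implies it for the original model.

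The main obstacle I expect is justifying that the cited formula applies in our degenerate setting, where rates vanish on some configurations. One has to check that the variational expression still equals the diffusion coefficient, and not merely bounds it, for both models simultaneously. I would handle this by using the variational formula itself as the definition of $D$, as is customary for KCLGs following \cite{Shapira2024Noncooperative}. With that convention the comparison is purely algebraic and the only content is the pointwise rate bound above.
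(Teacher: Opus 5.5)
Your proposal is correct and is essentially the paper's argument: the pointwise domination $c(x-1,x,\eta)\ge a\,c^{\land}(x-1,x,\eta)$ with $k=r$ from Part 3 of the proof of Theorem \ref{thm:q_c}, fed into the variational (Green--Kubo) formula of \cite[Proposition II.2.2]{Spohn2012IPS}, which is monotone in the rates for each fixed test function. You make explicit what the paper leaves implicit: the reflection that handles the other orientation, the constant $a$, and the fact that configurations with $\eta(x-1)=\eta(x)$ contribute nothing to the integrand, which justifies your adjustment of the sites $x-1,x$ (worth stating, since property (3) is formally only swap invariance).
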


\subsection{Preliminaries}

The proof of Theorem \ref{thm:D} relies on a variational characterization
of the diffusion coefficient as supremum over flows. The reader is
referred to \cite{Shapira2026Thomson} for detailed explanations,
we will briefly present the necessary input required for this paper.
\begin{defn}
A \emph{flow} is an antisymmetric function $\phi:\Omega\times\Omega\to\R$.
We say that a flow is \emph{admissible} if $\phi(\eta,\eta')\neq0$
implies that there exists $x\in\Z$ such that $\eta'=\eta^{x-1,x}$
and $c(x-1,x,\eta)\neq0$.
\end{defn}

\begin{defn}
\label{def:inner_prod}For an admissible flow $\phi$ we define (when
the sums converge): 
\begin{eqnarray}
\left\langle \phi,\phi\right\rangle  & = & \frac{1}{2}\mu\left[\left(\sum_{x\in\Z}\tau_{x}\phi(\eta,\eta^{-1,0})\right)^{2}\right],\\
\left\langle \phi^{1},\phi\right\rangle  & = & \frac{1}{2}\sum_{x\in\Z}\mu\left[\phi(\eta,\eta^{x-1,x})(\eta(x-1)-\eta(x))\right],
\end{eqnarray}
where $\tau_{x}$ is the translation by $x$.
\end{defn}

\begin{defn}
\label{def:V0}Fix an admissible flow $\phi$. We say that $\phi\in V_{0}$
if 
\begin{equation}
\sum_{x\in\Z}\mu\left[\sum_{\eta'}\phi(\eta,\eta')\times\tau_{x}\sum_{\eta'}\phi(\eta,\eta')\right]=0.\label{eq:div0}
\end{equation}
\end{defn}

\begin{thm}[Corollary 3.8 of  \cite{Shapira2026Thomson}]
\label{thm:thomson}Let $\phi\in V_{0}$ with $\left\langle \phi,\phi\right\rangle \neq0$.
Then 
\[
D\ge\frac{1}{q(1-q)}\frac{\left\langle \phi^{1},\phi\right\rangle ^{2}}{\left\langle \phi,\phi\right\rangle }.
\]
\end{thm}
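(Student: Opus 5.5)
The plan is to obtain the bound as the dual (Thomson-type) form of the usual Dirichlet-type variational formula for the diffusion coefficient, combined with a single Cauchy--Schwarz inequality. The starting point is the Green--Kubo/variational characterization (see \cite[Proposition II.2.2]{Spohn2012IPS}):
\[
q(1-q)\,D=\inf_{f}\frac{1}{2}\mu\Big[c(-1,0,\eta)\Big(\eta(-1)-\eta(0)+\sum_{x\in\Z}\big(\tau_{x}f(\eta^{-1,0})-\tau_{x}f(\eta)\big)\Big)^{2}\Big],
\]
where the infimum runs over local functions $f$. I would recast this in the language of flows, working in the Hilbert space $H$ of translation-covariant admissible flows. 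A flow $\psi$ is identified with its value on the edge $(-1,0)$, $\Psi(\eta)=\sum_{x}\tau_{x}\psi(\eta,\eta^{-1,0})$, and $H$ carries a weighted $L^{2}(\mu)$ inner product. The inner product is chosen, with weights $c(-1,0,\eta)$ or $1/c(-1,0,\eta)$ placed so that the normalizations of Definition \ref{def:inner_prod} come out, so that $\langle\phi,\phi\rangle$ is the squared norm of $\phi$. With this choice the current flow $\phi^{1}$ and the gradient flows $\nabla f$ are exactly the objects appearing above, so that
\[
q(1-q)D=\inf_{f}\|\phi^{1}+\nabla f\|^{2}.
\]
The pairing $\langle\phi^{1},\phi\rangle$ of Definition \ref{def:inner_prod} should then coincide with the inner product of these two elements of $H$.

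The key step is the orthogonality of gradients and divergence-free flows: for $\phi\in V_{0}$ and every local $f$,
\[
\langle\nabla f,\phi\rangle=0.
\]
To prove it, expand $\langle\nabla f,\phi\rangle$ as a sum over edges. Then use the reversibility of $\mu$ under $\eta\mapsto\eta^{x-1,x}$, which rests on point (3) of Definition \ref{def:KCLG}, together with the antisymmetry of $\phi$, to perform a summation by parts. This rewrites the sum as a pairing of the shifts $\tau_{x}f$ against the divergence $\sum_{\eta'}\phi(\eta,\eta')$. Condition \eqref{eq:div0} is designed precisely to make this vanish; more precisely, it says that the divergence is zero in the sense of the translation-averaged $H^{-1}$-type pairing, which is what is needed once translation invariance has been used to collapse the double sum.

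Granting the orthogonality, the conclusion is immediate. For every local $f$,
\[
\langle\phi^{1},\phi\rangle=\langle\phi^{1}+\nabla f,\phi\rangle\le\|\phi^{1}+\nabla f\|\,\|\phi\|.
\]
Squaring, dividing by $\langle\phi,\phi\rangle\neq0$, and taking the infimum over $f$ gives
\[
\frac{\langle\phi^{1},\phi\rangle^{2}}{\langle\phi,\phi\rangle}\le q(1-q)D,
\]
which is the claim.

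I expect the main obstacle to be the functional-analytic bookkeeping rather than the idea. First, the weights must be arranged so that $\langle\cdot,\cdot\rangle$ is a genuine inner product consistent with the variational formula; this requires care where $c=0$, which is why admissibility is imposed. Second, the infinite sums over $x$ must be justified: $\sum_{x}\tau_{x}\phi$ has to converge in $L^{2}(\mu)$, and the summation by parts must be shown to be legitimate for flows that are not local. I would handle this by first proving the orthogonality for local $f$ and flows with absolutely summable translates, and then extending by density and closure. This is presumably the technical content of \cite{Shapira2026Thomson}.
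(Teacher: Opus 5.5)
The paper gives no proof of this statement; it imports it as Corollary~3.8 of \cite{Shapira2026Thomson}. Your strategy is the natural one: weak duality against the Green--Kubo variational formula, meaning orthogonality of $\phi$ to gradients followed by a single Cauchy--Schwarz. The gap is that the one step where the hypothesis $\phi\in V_0$ enters is not carried out.

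After summation by parts you correctly get, up to sign, $\langle\nabla f,\phi\rangle=\sum_{x\in\Z}\mu[d\,\tau_x f]$ with $d(\eta)=\sum_{\eta'}\phi(\eta,\eta')$. But \eqref{eq:div0} does not say that this vanishes. It says $\langle\langle d,d\rangle\rangle=0$ for the form $\langle\langle g,h\rangle\rangle=\sum_{x}\mu[g\,\tau_x h]$, so it pairs $d$ with its own translates, not with those of $f$. The missing argument runs as follows:
\begin{enumerate}
\item $\mu[d]=0$, by swap invariance of $\mu$ and antisymmetry of $\phi$, so you may centre $f$.
\item On centred functions with absolutely summable covariances the form is symmetric and positive semidefinite, because $\langle\langle g,g\rangle\rangle=\lim_{N}(2N+1)^{-1}\mu[(\sum_{|x|\le N}\tau_x g)^2]$.
\item Hence $|\langle\langle d,f\rangle\rangle|^2\le\langle\langle d,d\rangle\rangle\,\langle\langle f,f\rangle\rangle=0$.
\end{enumerate}
This is the $L^2$-level limiting-variance form, not an $H^{-1}$ pairing. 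To make this rigorous, state the integrability you use. For example, $\sum_{x}\mu[|\phi(\eta,\eta^{x-1,x})|]<\infty$ defines $\Phi=\sum_x\tau_x\phi(\eta,\eta^{-1,0})$ and justifies every Fubini exchange, including the identification of $\langle\phi^1,\phi\rangle$ with $\frac12\mu[(\eta(-1)-\eta(0))\Phi]$. You also need absolute summability of $\mu[d\,\tau_x d]$.

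Two further corrections.

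First, no ``density and closure'' step is needed, and naive approximation would leave $V_0$. The infimum in the variational formula is over local $f$, so orthogonality to $\nabla f$ for local $f$ suffices. The computation above proves this directly for non-local $\phi$, such as the flow $\psi$ of Section~\ref{sec:proof_D}. By contrast, truncating $\psi$ to paths with $l(\omega)\le M$ gives divergence $(\eta(-1)-\eta(0))G$ with $G=\One_{\{l(\eta)\le M\}}$. Its limiting variance is $2q(1-q)\bigl(\mu[G]-\mu[G\,\tau_1G]\bigr)>0$, so it is not in $V_0$ for any finite $M$.

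Second, the weights cannot in general be arranged as you claim. Suppose $\|\phi^1+\nabla f\|^2=\frac12\mu[c(\cdots)^2]$ and the pairing is $\frac12\mu[(\cdots)\Phi]$. Then Cauchy--Schwarz puts $\frac12\mu[\Phi^2/c]$ in the denominator. You therefore recover the stated unweighted bound only if $c\ge1$ wherever $\Phi\neq0$, e.g.\ $c\in\{0,1\}$. This is a limitation of the statement, not of your method. Take exclusion with constant rate $c\equiv\varepsilon<1$ and the flow $\phi(\eta,\eta^{-1,0})=\eta(-1)-\eta(0)$. It is admissible and lies in $V_0$, and the unweighted bound would give $D\ge1$, whereas $D=\varepsilon$. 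What your argument proves is the $1/c$-weighted inequality. That is all the paper needs, since it applies the result only to the $k$-East-and model, where $c\in\{0,1\}$.
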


\begin{rem}
Our goal is to construct a good test flow $\phi$, i.e., an admissible
flow in $V_{0}$, that will have large $\left\langle \phi^{1},\phi\right\rangle $
and small $\left\langle \phi,\phi\right\rangle $. Let us take a more
careful look at the definitions above, to gain better intuition of
what this test flow should look like.

It is instructive to think of $\phi(\eta,\eta')$ as the flow of some
current going from $\eta$ to $\eta'$; positive $\phi(\eta,\eta')$
corresponding to charge transfer from $\eta$ to $\eta'$ and negative
$\phi(\eta,\eta')$ to charge transfer from $\eta'$ to $\eta$. The
flows we should consider are located at the origin, that is, charge
only moves from a configuration $\eta$ to $\eta^{x,x+1}$ for $x$
in a (possibly large) finite segment $\Lambda$. Then the term $\sum_{\eta'}\phi(\eta,\eta')$
in the definition of $V_{0}$ counts the charge exiting $\eta$ (or
negative charge accumulated at $\eta$) for $\phi$. Summing (formally)
over all translations, $\sum_{x\in\Z}\tau_{x}\sum_{\eta'}\phi(\eta,\eta')$
describes the total charge loss, if we account for transitions in
the entire space rather than just near the origin. Thus, $\phi\in V_{0}$
is satisfied when this total charge loss vanishes.

A good example to have in mind are flows of the type $\phi(\eta,\eta^{x-1,x})\propto\eta(x-1)-\eta(x)$:
the flow is positive when particles jump to the left, meaning that
the corresponding charge is related to the center of mass. In this
case, being in $V_{0}$ should be thought of as conservation of the
overall center of mass.

We now look at Definition (\ref{def:inner_prod}). The contribution
to $\left\langle \phi,\phi\right\rangle $ comes from pairs $x,y$
such that $\tau_{x}\phi(\eta,\eta^{0,1})$ is highly correlated with
$\tau_{y}\phi(\eta,\eta^{0,1})$. That is, at the overlap between
the segments $x+\Lambda$ and $y+\Lambda$, the charge transfer ``centered
at $x$'' and the charge transfer ``centered at $y$'' are positively
correlated. The contribution to $\left\langle \phi^{1},\phi\right\rangle $
comes from large (negative\footnote{Since we are only interested in $\left|\left\langle \phi_{l},\phi\right\rangle \right|$
the fact that the correlation is negative is not important, but rather
that it has constant sign.}) correlation between transfer of particles and transfer of charge.

In conclusion, we will be looking for a flow with ``overall conservation
of charge'', that has small overlap between transitions centered
around different points of space, and is highly correlated with the
particle flow.
\end{rem}

\bigskip{}

\subsection{Preparation}

Recall equation (\ref{eq:F_L}), and define
\begin{eqnarray}
l(\eta) & = & \inf\left\{ l:(k+1)F_{l}(\eta)<l\right\} ,\qquad\eta\in\Omega,\\
\Omega_{L} & = & \left\{ \eta\in\Omega:l(\eta)=L\right\} ,
\end{eqnarray}
so $l(\eta)$ is the first $l$ such that $\eta\notin\mathcal{A}_{l}^{\land}$
(see equation (\ref{eq:def_A_and})). By definition 
\begin{equation}
F_{l(\eta)}(\eta)<\frac{l(\eta)}{k+1},
\end{equation}
and we choose $q_{0}$ large enough to guarantee (e.g. by the Chernoff
bound) that
\begin{equation}
\mu\left[l(\eta)=L\right]\le\mu\left[F_{L}<\frac{L}{k+1}\right]\le32^{-L}.\label{eq:prob_bad}
\end{equation}

\subsection{The test flow}

The construction of the flow will be via a path: we will consider
for a configuration $\omega$ a legal path bringing us to $\omega^{-1,0}$.
For each transition we will set our flow to $1$ starting with $\omega(0)=0$
and $\omega(-1)=1$ (i.e., a particle moved to the right); or $-1$
when following the path is the reverse direction. This will make $\left\langle \phi^{1},\phi\right\rangle $
large. The limitation of this estimate, and the reason we cannot take
$q_{0}=q_{c}$, comes from the fact that we have a poor control over
the path itself, making $\left\langle \phi,\phi\right\rangle $ difficult
to bound.

In the following we use the notation 
\begin{equation}
\Omega_{0}=\left\{ \omega\in\Omega:\omega(0)=0,\omega(-1)=1\right\} \cap\left\{ \omega\in\Omega:l(\omega)\text{ finite}\right\} .
\end{equation}

\begin{defn}
\label{def:path}Fix $\omega\in\Omega_{0}$. We construct a legal
path $\omega_{0}=\omega,\omega_{1}=\omega_{0}^{x_{0}-1,x_{0}},\dots,\omega_{2n+1}=\omega_{2n}^{x_{2n}-1,x_{2n}}=\omega^{-1,0}$
(with $n$ depending on $\omega$). We do it in three phases:
\begin{enumerate}
\item Begin with a path $\omega_{0}=\omega,\omega_{1}=\omega_{0}^{x_{0}-1,x_{0}},\dots,\omega_{n}=\omega_{n-1}^{x_{n-1}-1,x_{n-1}}$,
whose last configuration satisfies $c(-1,0,\omega_{n})\neq0$ (here
$c=c^{\land}$ by Observation \ref{obs:D_and}). Recall Lemma \ref{lem:and_move_0},
and that $(k+1)F_{l(\omega)}(\omega)<l(\omega)$. If $c(-1,0,\omega)\neq0$
we are done (taking $n=0$). Otherwise, there exists $x\in[2,l(\omega)]$
with $\omega(x)=0,\ \omega(x-1)=1,\ c(x-1,x,\omega)=1$. Set $x_{0}$
to be the minimal such $x$, and $\omega_{1}=\omega^{x_{0}-1,x_{0}}$.

Continue by induction: assume that we constructed $\omega_{m}$. If
$c(-1,0,\omega)\neq0$ we move to phase 2. Otherwise, set $x_{m}$
to be the minimal $x\in[2,l(\omega)]$ such that $\omega_{m}(x)=0,\ \omega_{m}(x-1)=1,\ c(x-1,x,\omega_{m})=1$,
and $\omega_{m+1}=\omega_{m}^{x_{m}-1,x_{m}}$.

This procedure is valid, since we only exchange sites in $[1,l(\omega)]$,
so the condition $(k+1)F_{l(\omega)}(\omega_{m})<l(\omega)$ holds
for all $m$. Moreover, it must end: at each step the center of mass
$\sum_{x=1}^{l(\omega)}x\eta(x)$ increases by $1$, and it cannot
exceed $\sum_{x=1}^{l(\omega)}x\cdot1\le l(\omega)^{2}.$
\item Next, exchange $-1$ and $0$, choosing $x_{n}=0$ and $\omega_{n+1}=\omega_{n}^{-1,0}$.
\item Finally, wind back the path of phase 1: for $m\in\{n+1,\dots2n\}$
we set $x_{m}=x_{2n-m}$, and $\omega_{m+1}=\omega_{m}^{x_{m}-1,x_{m}}$.
Since all these transitions are in $[1,L]$ they do not depend on
the configuration at $-1$ and $0$, hence they are all legal and
end at $\omega_{2n+1}=\omega^{-1,0}$.
\end{enumerate}
\end{defn}

It is not difficult to verify the following properties of the path:
\begin{lem}
For every $\omega\in\Omega_{0}$, the path given in Definition \ref{def:path}
satisfies:
\begin{enumerate}
\item $\omega_{m}(x)=\omega(x)$ for all $m$ and all $x\notin[-1,l(\omega)]$.
\item $\omega_{m}(-1)=1,\ \omega_{m}(0)=1$ for $m\in\{0,\dots,n\}$, and
$\omega_{m}(-1)=0,\ \omega_{m}(0)=1$ for $m\in\{n+1,\dots,2n+1\}$.
\item $F_{l(\omega)}(\omega_{m})<\frac{l(\omega)}{k+1}$ for all $m$.
\item $x_{m}\le l(\omega)$ for all $m$.
\end{enumerate}
\end{lem}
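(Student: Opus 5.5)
The plan is to prove all four properties together by reading off, step by step, which pair of sites each transition of Definition~\ref{def:path} exchanges. The single fact doing the work is that every phase~1 and phase~3 transition exchanges two sites of $[1,l(\omega)]$, and the unique phase~2 transition exchanges $-1$ and $0$. I will show this first, then derive (1)--(4). Throughout, $l(\omega)<\infty$ because $\omega\in\Omega_{0}$.

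\emph{Supports of the transitions.} In phase~1, $x_{m}$ is chosen by construction in $[2,l(\omega)]$, so the exchanged pair $\{x_{m}-1,x_{m}\}$ lies in $[1,l(\omega)]$. In phase~2, $x_{n}=0$. In phase~3, $x_{m}=x_{2n-m}$ with $2n-m\in\{0,\dots,n-1\}$, so again $x_{m}\in[2,l(\omega)]$. This gives (4) at once, since every $x_{m}$ is either $0$ or in $[2,l(\omega)]$. It also gives (1): no transition ever touches a site outside $[-1,l(\omega)]$.

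\emph{The sites $-1$ and $0$.} Since $\omega\in\Omega_{0}$, we have $\omega(-1)=1$ and $\omega(0)=0$. Phase~1 does not touch these sites, so $\omega_{m}(-1)=1$ and $\omega_{m}(0)=0$ for $m\le n$. The phase~2 swap gives $\omega_{n+1}(-1)=0$ and $\omega_{n+1}(0)=1$, and phase~3 does not touch these sites either. This is property~(2). As printed, (2) reads $\omega_{m}(0)=1$ for $m\le n$; that is evidently a typo for $\omega_{m}(0)=0$, which is what the argument above proves.

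\emph{Property (3).} $F_{l(\omega)}$ is unchanged by any exchange of two sites inside $[1,l(\omega)]$, and also by the exchange of $-1$ and $0$, since those sites lie outside $[1,l(\omega)]$. Hence $F_{l(\omega)}(\omega_{m})=F_{l(\omega)}(\omega)$ for every $m$. By the definition of $l(\omega)$ as an infimum attained at a finite value, $(k+1)F_{l(\omega)}(\omega)<l(\omega)$, which is (3).

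\emph{Closing the path.} To confirm the path really ends at $\omega^{-1,0}$, note that a swap of two sites in $[1,l(\omega)]$ commutes with the swap of $-1,0$, because their supports are disjoint. An induction on $i$ then gives $\omega_{n+1+i}=\omega_{n-i}^{-1,0}$, so $\omega_{2n+1}=\omega^{-1,0}$.

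\emph{Main obstacle.} There is no genuine difficulty. The only point needing care is the termination of phase~1 and the existence of $x_{m}$ at each step, which the definition itself justifies: Lemma~\ref{lem:and_move_0} applies because of (3), and the center of mass $\sum_{x=1}^{l(\omega)}x\,\eta(x)$ strictly increases at each phase~1 step and is bounded by $l(\omega)^{2}$. So the main step to watch is keeping the stopping test in terms of $\omega_{m}$, i.e.\ $c(-1,0,\omega_{m})\neq0$, rather than $\omega$ as literally written in Definition~\ref{def:path}.
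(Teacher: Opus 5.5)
Your verification is correct and is the same direct check the paper leaves to the reader ("it is not difficult to verify"): every transition swaps either $\{-1,0\}$ or a pair inside $[1,l(\omega)]$, and (1)--(4) follow from that. You are also right that (2) should read $\omega_m(0)=0$ for $m\le n$, and that the phase~1 stopping test should involve $\omega_m$ rather than $\omega$.

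One caveat, which concerns only your side remark: the existence of $x_m$ that you attribute to Lemma~\ref{lem:and_move_0} can fail when $k\ge 2$, because that lemma's pigeonhole step is off. For example, $\omega(1)=0$, $\omega(2)=1$ gives $l(\omega)=1$ and $c(-1,0,\omega)=0$, while $[2,l(\omega)]=\emptyset$. This is a defect of Definition~\ref{def:path} itself, not of your proof of (1)--(4).
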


We will now define our flow using these paths:
\begin{eqnarray}
\psi(\eta,\eta') & = & \sum_{\omega\in\Omega_{0}}\psi_{\omega}(\eta,\eta'),\\
\psi_{\omega}(\eta,\eta') & = & \sum_{j=1}^{2n+1}\One_{\eta=\omega_{j-1},\eta'=\omega_{j}}-\sum_{j=1}^{2n+1}\One_{\eta=\omega_{j},\eta'=\omega_{j-1}}.
\end{eqnarray}
By construction $\psi(\eta',\eta)=-\psi(\eta,\eta')$, and $\psi$
is supported on legal transitions, making it an admissible flow. Note
that, while $\Omega_{0}$ is non-countable, the summand $\psi_{\omega}(\eta,\eta')$
vanishes almost surely for all but finitely many configurations: for
$\eta\in\Omega$, let
\begin{equation}
\Omega_{0}(\eta)=\left\{ \omega\in\Omega_{0}:\exists m\in\{0,\dots,2n(\omega)+1\}\text{ such that }\omega_{m}=\eta\right\} ,
\end{equation}
and observe
\[
\forall\omega\in\Omega_{0}(\eta),\quad F_{l(\omega)}(\eta)<\frac{l(\omega)}{k+1}.
\]
By equation (\ref{eq:prob_bad}), $\sum_{L}\mu\left[F_{L}<\frac{L}{k+1}\right]<\infty$
, so almost surely $F_{L}<\frac{L}{k+1}$ finitely often. We may then
define
\[
L_{*}=\max\left\{ L:F_{L}(\eta)<\frac{L}{k+1}\right\} <\infty.
\]
Then $l(\omega)\le L_{*}$, and therefore $\omega$ agrees with $\eta$
outside $[-1,L_{*}]$, so 
\[
\left|\Omega_{0}(\eta)\right|\le2^{L_{*}+2}
\]
and the sum in the definition of $\psi$ is indeed finite for almost
all $\eta\in\Omega$.
\begin{lem}
For almost all $\eta\in\Omega$, 
\[
\sum_{\eta'}\psi(\eta,\eta')=\eta(-1)-\eta(0).
\]
\end{lem}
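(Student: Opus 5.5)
The plan is to compute the divergence of each path flow $\psi_{\omega}$ separately, and then sum over $\omega\in\Omega_{0}$. The sum is legitimate because, for almost every $\eta$, only the finitely many $\omega\in\Omega_{0}(\eta)$ contribute; this set has at most $2^{L_{*}+2}$ elements, as shown above.

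\emph{Step 1: divergence of a single path flow.} Fix $\omega\in\Omega_{0}$ with its path $\omega_{0},\dots,\omega_{2n+1}$. By definition of $\psi_{\omega}$,
\[
\sum_{\eta'}\psi_{\omega}(\eta,\eta')=\sum_{j=1}^{2n+1}\left(\One_{\eta=\omega_{j-1}}-\One_{\eta=\omega_{j}}\right).
\]
This holds without any assumption that the $\omega_{j}$ are distinct, since each occurrence of $\eta$ along the path is counted separately. The sum telescopes to
\[
\One_{\eta=\omega_{0}}-\One_{\eta=\omega_{2n+1}}=\One_{\eta=\omega}-\One_{\eta=\omega^{-1,0}}.
\]
So each $\psi_{\omega}$ is a unit flow from $\omega$ to $\omega^{-1,0}$.

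\emph{Step 2: summing over $\omega$.} Summing the identity from Step 1 over $\omega\in\Omega_{0}$ (a finite sum for a.e.\ $\eta$), and using that $\omega\mapsto\omega^{-1,0}$ is a bijection, gives
\[
\sum_{\eta'}\psi(\eta,\eta')=\One_{\eta\in\Omega_{0}}-\One_{\eta^{-1,0}\in\Omega_{0}}.
\]
We now identify the two indicators.
\begin{itemize}
\item $\eta\in\Omega_{0}$ means $\eta(-1)=1$, $\eta(0)=0$ and $l(\eta)<\infty$.
\item $\eta^{-1,0}\in\Omega_{0}$ means $\eta(-1)=0$, $\eta(0)=1$ and $l(\eta^{-1,0})<\infty$. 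Since $l$ depends only on the sites in $[1,\infty)$, we have $l(\eta^{-1,0})=l(\eta)$.
\end{itemize}
Moreover $l(\eta)<\infty$ almost surely. Indeed, $F_{L}/L\to 1-q<\frac{1}{k+1}$ for $q>q_{0}$ by the law of large numbers (or directly from \eqref{eq:prob_bad}). Hence almost surely
\[
\sum_{\eta'}\psi(\eta,\eta')=\One_{\eta(-1)=1,\eta(0)=0}-\One_{\eta(-1)=0,\eta(0)=1}=\eta(-1)-\eta(0).
\]

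\emph{Main obstacle.} The only delicate point is the bookkeeping in exchanging the sum over $\eta'$ with the sum over $\omega$. For a.e.\ $\eta$, only finitely many $\omega$ have a path passing through $\eta$, namely those in $\Omega_{0}(\eta)$. Each such path has finite length, at most of order $l(\omega)^{2}$ by the center-of-mass argument. So all the sums involved are finite, and the rearrangement is justified.
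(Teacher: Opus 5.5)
Your proof is correct and is essentially the paper's argument: each $\psi_{\omega}$ telescopes to $\One_{\eta=\omega}-\One_{\eta=\omega^{-1,0}}$, summing gives $\One_{\eta\in\Omega_{0}}-\One_{\eta^{-1,0}\in\Omega_{0}}$, and a case check on $(\eta(-1),\eta(0))$ finishes; your remark that $l(\eta^{-1,0})=l(\eta)<\infty$ almost surely makes explicit something the paper leaves implicit. The law of large numbers (for $q>k/(k+1)$) is the right justification for $l(\eta)<\infty$, so drop the parenthetical appeal to \eqref{eq:prob_bad}: a bound $\mu[l(\eta)=L]\le32^{-L}$ summed over $L$ would make $\{l<\infty\}$ unlikely, not almost sure.
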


\begin{proof}
Fix $\eta\in\Omega$, and let $\omega\in\Omega_{0}(\eta)$. If $\eta$
is some intermediate point of the path, i.e., $\eta=\omega_{m}$ for
$m\notin\{0,2n+1\}$, then $\sum_{\eta'}\psi_{\omega}(\eta,\eta')$
will have two contributions with opposite signs, from $\eta'=\omega_{m+1}$
and from $\eta'=\omega_{m-1}$. Since these cancel out, we are left
with either $\eta=\omega$ or $\eta=\omega_{2n+1}=\omega^{-1,0}$.
That is,
\begin{eqnarray*}
\sum_{\eta'}\psi(\eta,\eta') & = & \sum_{\omega\in\Omega_{0}(\eta)}\One_{\eta=\omega}-\sum_{\omega\in\Omega_{0}(\eta)}\One_{\eta=\omega^{-1,0}}=\One_{\eta\in\Omega_{0}}-\One_{\eta^{-1,0}\in\Omega_{0}}.
\end{eqnarray*}
If $\eta(-1)-\eta(0)=1$, then $\eta\in\Omega_{0}$ and $\eta^{-1,0}\notin\Omega_{0}$;
if $\eta(-1)-\eta(0)=0$ then $\eta\notin\Omega_{0}$ and $\eta^{-1,0}\notin\Omega_{0}$,
and if $\eta(-1)-\eta(0)=-1$ then $\eta\notin\Omega_{0}$ and $\eta^{-1,0}\in\Omega_{0}$.
This proves the lemma.
\end{proof}
\begin{lem}
$\psi\in V_{0}$.
\end{lem}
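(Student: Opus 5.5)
By the previous lemma, for $\mu$-almost every $\eta$ the divergence of $\psi$ is $\sum_{\eta'}\psi(\eta,\eta')=\eta(-1)-\eta(0)$. Its translates are
\[
\tau_{x}\sum_{\eta'}\psi(\eta,\eta')=\eta(x-1)-\eta(x).
\]
The condition (\ref{eq:div0}) defining $V_{0}$ is therefore
\[
\sum_{x\in\Z}\mu\bigl[(\eta(-1)-\eta(0))(\eta(x-1)-\eta(x))\bigr]=0 .
\]
The plan is to compute each summand under the product measure $\mu=\mu_{q}$ and observe that the sum telescopes to zero.

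First compute the covariances. Write $\bar\eta(y)=\eta(y)-(1-q)$, so that $\mu[\bar\eta(y)\bar\eta(z)]=q(1-q)\One_{y=z}$. Since $\eta(-1)-\eta(0)=\bar\eta(-1)-\bar\eta(0)$, and similarly for the translate, the summand equals
\[
q(1-q)\bigl(\One_{x-1=-1}-\One_{x-1=0}-\One_{x=-1}+\One_{x=0}\bigr).
\]
So only the three values $x=-1,0,1$ contribute, giving $-q(1-q)$, $2q(1-q)$ and $-q(1-q)$ respectively. These sum to zero. In particular the series converges absolutely, since it has finitely many nonzero terms, and the condition holds.

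The only delicate point is justification. The identity from the previous lemma holds only almost surely, which is harmless inside expectations. Translation invariance of $\mu$ lets us replace $\tau_x$ of the almost-sure identity by the translated identity. We must also make sure the expression in Definition \ref{def:V0} is understood with $\tau_{x}$ acting on the divergence function as above, which is how the paper uses it in the remark. I expect no real obstacle here: the structure of the flow (a path from $\omega$ to $\omega^{-1,0}$) was designed exactly so that its divergence is the local gradient $\eta(-1)-\eta(0)$. Its spatial sum telescopes, which expresses conservation of the center of mass.
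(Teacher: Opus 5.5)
Your proof is correct and takes essentially the paper's approach: use the previous lemma to reduce the condition to $\sum_{x}\mu[(\eta(-1)-\eta(0))(\eta(x-1)-\eta(x))]=0$, then use the product structure of $\mu$. The only difference is bookkeeping. The paper telescopes the partial sum over $[-N,N]$ to $\eta(-N-1)-\eta(N)$ before invoking independence, while you compute each covariance directly and find that only $x=-1,0,1$ contribute, which also gives absolute convergence for free.
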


\begin{proof}
By the discussion above $\psi$ is an admissible flow. For $N\in\N$,
\begin{multline*}
\sum_{x\in[-N,N]}\mu\left[(\eta(-1)-\eta(0))\ \tau_{x}\sum_{\eta'}\psi(\eta,\eta')\right]=\mu\left[(\eta(-1)-\eta(0))\ \sum_{x\in[-N,N]}\left(\eta(x-1)-\eta(x)\right)\right]\\
=\mu\left[(\eta(-1)-\eta(0))\ \left(\eta(-N-1)-\eta(N)\right)\right].
\end{multline*}
The last term equals $0$ for $N>0$, hence equation (\ref{eq:div0})
is satisfied.
\end{proof}
\begin{lem}
\label{lem:psi_psi}$\left\langle \psi,\psi\right\rangle \le11$.
\end{lem}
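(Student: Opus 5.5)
We bound $\left\langle \psi,\psi\right\rangle =\frac{1}{2}\mu\left[\left(\sum_{x}\tau_{x}\psi(\eta,\eta^{-1,0})\right)^{2}\right]$ by expanding the square and using Cauchy–Schwarz on each correlation term, after first writing everything as a sum over translated paths. Concretely, by translation invariance $\tau_{x}\psi(\eta,\eta^{-1,0})=\psi(\tau_{x}\eta,(\tau_{x}\eta)^{-1-x,-x})$, which is a sum over $\omega\in\Omega_{0}$ of $\pm$ indicators that the edge $\{\eta,\eta^{-1,0}\}$ is traversed by the path $\tau_{-x}$ of $\omega$. Each path $\omega_{0},\dots,\omega_{2n+1}$ is confined to $[-1,l(\omega)]$ (property (4) of the path lemma), so only translations with $x\in[-l,1]$, i.e. paths anchored at $x$ with $l(\omega)\ge$ distance to the origin, can contribute. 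I would therefore decompose
\[
\sum_{x}\left|\tau_{x}\psi(\eta,\eta^{-1,0})\right|\le\sum_{x}\sum_{L}\left|\tau_{x}\psi^{(L)}(\eta,\eta^{-1,0})\right|,\qquad\psi^{(L)}=\sum_{\omega\in\Omega_{0},\,l(\omega)=L}\psi_{\omega},
\]
where for fixed $L$ only $x$ in a window of size $L+2$ matter.

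The key step is a pointwise bound on $|\psi^{(L)}(\eta,\eta')|$: if $\eta$ lies on the path of $\omega$ with $l(\omega)=L$, then $\omega$ agrees with $\eta$ outside $[-1,L]$, so at most $2^{L+2}$ such $\omega$ exist, and each path passes an edge at most a bounded number of times (phase 1 strictly increases the center of mass, so each edge appears at most once in phase 1 and once in phase 3). Hence $|\psi^{(L)}(\eta,\eta')|\le 2^{L+3}\One_{E_{L}(\eta)}$, where $E_{L}(\eta)$ is the event that $F_{L}<\frac{L}{k+1}$ holds for the configuration on the relevant window (property (3) of the path lemma: every configuration on such a path satisfies this, and it is invariant under exchanges inside $[1,L]$; the single exchange at $\{-1,0\}$ shifts it by at most one particle, which I would absorb by using the event for $\eta$ or $\eta^{-1,0}$). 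Then by Minkowski in $L^{2}(\mu)$,
\[
\left\langle \psi,\psi\right\rangle ^{1/2}\le\frac{1}{\sqrt{2}}\sum_{L\ge1}(L+2)\,2^{L+3}\,\mu\left[F_{L}<\tfrac{L+1}{k+1}\right]^{1/2},
\]
and using (\ref{eq:prob_bad}), i.e. probability $\le32^{-L}$ (giving square root $\le 2^{-5L/2}$, with the shift by one absorbed into the choice of $q_{0}$), the series is $\sum_{L}(L+2)2^{3-3L/2}$, which is a small explicit constant; squaring and halving should give something like $\le 11$.

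The main obstacle is making the counting honest: one must check that the translation window, the number $2^{L+2}$ of possible $\omega$, and the number of traversals per path combine with $32^{-L}$ to a convergent sum with a constant at most $11$; the choice of the constant $32$ in (\ref{eq:prob_bad}) is presumably tuned exactly for this, so if Minkowski gives too crude a constant I would instead expand the square as $\sum_{L,L'}$ and use Cauchy–Schwarz termwise, which yields the same geometric series but with slightly better constants.
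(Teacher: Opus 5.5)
Your structure is the paper's. You decompose according to $l(\omega)=L$ and note three things. At most $2^{L+2}$ configurations $\omega$ with $l(\omega)=L$ have a given configuration on their path. Only $O(L)$ translations contribute. Property (3) of the path lemma forces the rare event $\mathcal{B}(x,L)=\{F_L(\tau_x\eta)<\frac{L}{k+1}\}$. Together these give $\bigl|\sum_x\tau_x\psi(\eta,\eta^{-1,0})\bigr|\le\sum_L 2^{L+2}S_L$ with $S_L=\sum_x\One_{\eta\in\mathcal{B}(x,L)}$, after which \eqref{eq:prob_bad} beats the combinatorial factor. The only difference is the final $L^2$ step. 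The paper uses a weighted Cauchy--Schwarz ($2^L=2^{-L}\cdot 4^L$) together with $S_L^2\le(L+1)S_L$. You use Minkowski, which is cleaner and in fact slightly sharper.

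The gap is in the constant, which is the actual content of the statement. With your counts ($2^{L+3}$ and $L+2$ translations) the bound you wrote evaluates to
\[
\langle\psi,\psi\rangle\le\frac12\Bigl(8\sum_{L\ge1}(L+2)\,2^{-3L/2}\Bigr)^2\approx120,
\]
not something $\le 11$. ``Squaring and halving'' also counts the $\frac12$ twice, since you already inserted it as $1/\sqrt2$. Your fallback cannot help. Expanding $(\sum_L X_L)^2$ and bounding $\mu[X_LX_{L'}]\le\|X_L\|_2\|X_{L'}\|_2$ termwise reproduces $(\sum_L\|X_L\|_2)^2$, which is exactly the Minkowski bound. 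What is needed is sharper counting, all available from Definition~\ref{def:path}:
(i) The configurations $\omega_0,\dots,\omega_{2n+1}$ are pairwise distinct. Phase 1 raises the center of mass by one at each step, and phase 3 visits $\omega_{n-i}^{-1,0}$, which differ from every phase-1 configuration at $\{-1,0\}$. Hence $|\psi_\omega|\le 1$ and the factor is $2^{L+2}$, not $2^{L+3}$.
(ii) A path with $l(\omega)=L$ uses only the $L$ edges indexed by $\{0\}\cup[2,L]$, so only $L$ translations contribute.
(iii) No shift in $F_L$ is needed, because $F_L$ only sees $[1,L]$ and property (3) holds for every $m$, including after the swap at $\{-1,0\}$. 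So \eqref{eq:prob_bad} applies verbatim and $q_0$ need not be touched.
Minkowski then gives $\langle\psi,\psi\rangle\le\frac12\bigl(4\sum_{L\ge1}L\,2^{-3L/2}\bigr)^2\approx 5.7\le 11$. You could even use $2^L$ in place of $2^{L+2}$, since $\omega(-1)=1$ and $\omega(0)=0$ are fixed.

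For comparison, the paper's own chain contains a slip: pulling $2^2$ out of the square gives the prefactor $\frac12\cdot16=8$, not $2$. Its Cauchy--Schwarz route therefore actually yields $\frac83\cdot 11=\frac{88}{3}$. Only the finiteness of $\langle\psi,\psi\rangle$ is used in Theorem~\ref{thm:D}, and your route with the counting above gives the better constant.
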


\begin{proof}
Define the event 
\begin{equation}
\mathcal{B}(x,L)=\left\{ \eta:F_{L}(\tau_{x}\eta)<\frac{L}{k+1}\right\} ,
\end{equation}
so by equation (\ref{eq:prob_bad}) and translation invariance of
$\mu$
\begin{equation}
\mu\left[\mathcal{B}(x,L)\right]\le32^{-L}.
\end{equation}

Let us estimate:
\begin{eqnarray*}
\left|\sum_{x\in\Z}\tau_{x}\psi(\eta,\eta^{-1,0})\right| & = & \left|\sum_{x=0}^{\infty}\psi(\tau_{x}\eta,(\tau_{x}\eta)^{x-1,x})\right|\\
 & \le & =\sum_{x=0}^{\infty}\sum_{\omega\in\Omega_{0}(\tau_{x}\eta)}\One_{x\le l(\omega)}=\sum_{x=0}^{\infty}\sum_{L=x}^{\infty}\sum_{\omega\in\Omega_{0}(\tau_{x}\eta)}\One_{l(\omega)=L}\One_{\eta\in\mathcal{B}(x,L)}\\
 & \le & \sum_{x=0}^{\infty}\sum_{L=x}^{\infty}2^{L+2}\One_{\mathcal{B}(x,L)}=\sum_{L=1}^{\infty}2^{L+2}\sum_{x=0}^{L}\One_{\eta\in\mathcal{B}(x,L)}.
\end{eqnarray*}
Then
\begin{eqnarray*}
\left\langle \psi,\psi\right\rangle  & \le & =\frac{1}{2}\mu\left[\left(\sum_{L=1}^{\infty}2^{L+2}\sum_{x=0}^{L}\One_{\eta\in\mathcal{B}(x,L)}\right)^{2}\right]=2\mu\left[\left(\sum_{L=1}^{\infty}2^{-L}\ 4^{L}\sum_{x=1}^{L}\One_{\eta\in\mathcal{B}(x,L)}\right)^{2}\right]\\
 & \le & 2\mu\left[\sum_{L=1}^{\infty}4^{-L}\times\sum_{L=1}^{\infty}16^{L}\left(\sum_{x=0}^{L}\One_{\eta\in\mathcal{B}(x,L)}\right)^{2}\right]\le\mu\left[\sum_{L=1}^{\infty}16^{L}(L+1)\sum_{x=0}^{L}\One_{\eta\in\mathcal{B}(x,L)}\right]\\
 & = & \sum_{L=1}^{\infty}16^{L}(L+1)\sum_{x=0}^{L}\mu\left[\mathcal{B}(x,L)\right]\le\sum_{L=1}^{\infty}(L+1)^{2}\ 2^{-L}=11
\end{eqnarray*}
\end{proof}
The next step will be to estimate $\left\langle \phi^{1},\psi\right\rangle $,
and we start with the following lemma:
\begin{lem}
Fix $\eta\in\Omega$, and let $\tilde{\eta}=\eta^{-1,0}$. Then for
all $x\ge2$, 
\[
\psi(\eta,\eta^{x-1,x})=-\psi(\tilde{\eta},\tilde{\eta}^{x-1,x}).
\]
\end{lem}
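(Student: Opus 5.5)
The plan is to prove the identity path by path. I will show that for every $\omega\in\Omega_{0}$, every $x\ge2$ and every $\eta$,
\[
\psi_{\omega}(\eta,\eta^{x-1,x})=-\psi_{\omega}(\tilde{\eta},\tilde{\eta}^{x-1,x}),
\]
and then sum over $\omega\in\Omega_{0}$. The sum is finite for almost every $\eta$, and $\tilde\eta$ agrees with $\eta$ outside $\{-1,0\}$, so it is finite for $\tilde\eta$ as well. Indeed the sets $\Omega_{0}(\eta)$ and $\Omega_{0}(\tilde\eta)$ coincide, because $\omega_m$ and $\omega_{2n+1-m}$ appear in the same path, as shown below.

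\textbf{Step 1: the $-1,0$ pattern along the path.} Every configuration on the path of Definition \ref{def:path} satisfies $\omega_m(-1)\neq\omega_m(0)$. In phase 1 sites $-1,0$ are untouched, so $(\omega_m(-1),\omega_m(0))=(1,0)$ for $m\le n$. After the swap of phase 2 the pattern is $(0,1)$ for $m\ge n+1$.

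Consequently, if $\eta(-1)=\eta(0)$, no configuration of any path equals $\eta$. Then $\psi(\eta,\cdot)=0$, and since $\tilde\eta=\eta$ the claim reads $0=-0$. So I may assume $\eta(-1)\neq\eta(0)$.

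\textbf{Step 2: the symmetry of the path.} Phase 3 replays phase 1 backwards on the swapped configurations. I will check by induction that
\[
\omega_{2n+1-m}=\omega_m^{-1,0}\quad\text{for } m\in\{0,\dots,n\}.
\]
The case $m=n$ is phase 2. For the inductive step, $\omega_{2n+1-m}$ is obtained from $\omega_{2n-m}=\omega_{m+1}^{-1,0}$ by exchanging $x_{m}-1,x_{m}$. Exchanges at positions $\ge1$ commute with the exchange of $-1,0$, so this gives $\omega_m^{-1,0}$.

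\textbf{Step 3: where transitions at $x\ge2$ occur.} Every step with $x_j\ge2$ lies in phase 1 or phase 3, since phase 2 is the only step at $x=0$. Each phase-1 step $\omega_m\to\omega_{m+1}$ at $x=x_m$ contributes $+1$ to $\psi_\omega(\omega_m,\omega_{m+1})$. Its mirror in phase 3 is the step $\omega_{2n-m}\to\omega_{2n+1-m}$, that is $\omega_{m+1}^{-1,0}\to\omega_m^{-1,0}$, at the same $x$. It contributes $+1$ to $\psi_\omega(\omega_{m+1}^{-1,0},\omega_m^{-1,0})$, hence $-1$ to $\psi_\omega(\omega_m^{-1,0},\omega_{m+1}^{-1,0})$.

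\textbf{Step 4: matching terms.} Now fix $\eta$ and $x\ge2$. Every term of $\psi_\omega(\eta,\eta^{x-1,x})$ comes from a step $j$ with $\{\omega_{j-1},\omega_j\}=\{\eta,\eta^{x-1,x}\}$. The map $j\mapsto 2n+2-j$ is a bijection between phase-1 and phase-3 steps. Under this bijection, a term for the pair $(\eta,\eta^{x-1,x})$ becomes a term for $(\tilde\eta,\tilde\eta^{x-1,x})$ with the orientation reversed, because the exchange at $x\ge2$ commutes with swapping $-1,0$. This gives the claimed sign flip for each $\psi_\omega$, and summing over $\omega$ gives the lemma.

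I expect the main obstacle to be this last bookkeeping, in particular not double counting when a path revisits a configuration. Handling each step $j$ individually through the bijection $j\mapsto 2n+2-j$ avoids the issue: multiplicities are preserved automatically, so revisits need no separate treatment.
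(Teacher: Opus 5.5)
Your proof is correct and follows the same route as the paper. Both rest on the mirror symmetry $\omega_{2n+1-m}=\omega_m^{-1,0}$ of the path, which gives $\Omega_0(\eta)=\Omega_0(\tilde\eta)$ and pairs each phase-1 step with a reversed phase-3 step at the same $x\ge2$. Your explicit bijection $j\mapsto 2n+2-j$ on steps just makes the paper's one-line bookkeeping precise. In fact revisits cannot occur: the center of mass strictly increases in phase 1, and configurations from the two halves differ at $\{-1,0\}$.
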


\begin{proof}
Recall the construction of the path (Definition \ref{def:path}).
We note first that $\Omega_{0}(\eta)=\Omega_{0}(\tilde{\eta})$ since
if $\eta=\omega_{m}$ for some $\omega\in\Omega_{0}$ then $\tilde{\eta}=\omega_{\tilde{m}}$
for $\tilde{m}=2n+1-m$. Moreover, if $\eta^{x-1,x}=\omega_{m+1}$
then $\tilde{\eta}^{x-1,x}=\omega_{\tilde{m}-1}$ and vice versa,
hence $\psi_{\omega}(\eta,\eta^{x-1,x})=-\psi_{\omega}(\tilde{\eta},\tilde{\eta}^{x-1,x})$.
This proves the lemma.
\end{proof}
\begin{lem}
\label{lem:phi1_psi}$\left\langle \phi^{1},\psi\right\rangle \ge\frac{1}{2}(1-q)q^{k}$.
\end{lem}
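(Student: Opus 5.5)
The plan is to expand $\left\langle \phi^{1},\psi\right\rangle =\frac{1}{2}\sum_{x\in\Z}\mu\left[\psi(\eta,\eta^{x-1,x})(\eta(x-1)-\eta(x))\right]$ and show that all terms except $x=0$ vanish. The $x=0$ term is nonnegative, and we bound it from below on an explicit event.

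\emph{Step 1: the terms $x\neq0$ vanish.} We split by the location of the edge.
\begin{itemize}
\item $x\le-1$ and $x=1$. By Definition \ref{def:path}, every path only uses exchanges $(x_{m}-1,x_{m})$ with either $x_{m}\in[2,l(\omega)]$ (phases 1 and 3) or $x_{m}=0$ (phase 2). Hence no path ever uses an edge $(x-1,x)$ with $x\le-1$ or $x=1$, and $\psi(\eta,\eta^{x-1,x})=0$ identically for these $x$.
\item $x\ge2$. We use the previous lemma together with the involution $\eta\mapsto\tilde{\eta}=\eta^{-1,0}$, which preserves $\mu$. For $x\ge2$ the factor $\eta(x-1)-\eta(x)$ is unchanged under this involution, while $\psi(\eta,\eta^{x-1,x})$ changes sign. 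Therefore
\[
\mu\left[\psi(\eta,\eta^{x-1,x})(\eta(x-1)-\eta(x))\right]=-\mu\left[\psi(\eta,\eta^{x-1,x})(\eta(x-1)-\eta(x))\right]=0.
\]
Integrability follows from the bound $\left|\Omega_{0}(\eta)\right|\le2^{L_{*}+2}$ together with equation (\ref{eq:prob_bad}).
\end{itemize}
Summing over $x$ also requires an exchange of sum and expectation. This can be justified by the same domination used in Lemma \ref{lem:psi_psi}.

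\emph{Step 2: the term $x=0$.} The edge $(-1,0)$ is crossed only in phase 2, from $\omega_{n}$ to $\omega_{n+1}$. At that point $\omega_{n}(-1)=\omega(-1)=1$ and $\omega_{n}(0)=\omega(0)=0$, since phase 1 does not touch $-1$ or $0$.

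Take first $\eta$ with $\eta(-1)=1$ and $\eta(0)=0$. The pair $(\eta,\eta^{-1,0})$ can only appear in the forward direction for any $\omega$. Hence
\[
\psi(\eta,\eta^{-1,0})=\#\{\omega\in\Omega_{0}:\omega_{n(\omega)}=\eta\}\ge0 .
\]
Moreover, if in addition $c(-1,0,\eta)=1$ and $l(\eta)<\infty$, then $\eta\in\Omega_{0}$ with $n(\eta)=0$, so this count is at least $1$.

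Symmetrically, take $\eta$ with $\eta(-1)=0$ and $\eta(0)=1$. Then $\psi(\eta,\eta^{-1,0})=-\psi(\eta^{-1,0},\eta)\le0$, and the factor $\eta(-1)-\eta(0)=-1$ makes the product nonnegative again. When $\eta(-1)=\eta(0)$ the factor $\eta(-1)-\eta(0)$ is $0$.

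Since $l(\eta)<\infty$ almost surely by (\ref{eq:prob_bad}), and $c^{\land}(-1,0,\eta)=1$ requires $[1,k]$ empty, we get
\[
\left\langle \phi^{1},\psi\right\rangle \ge\frac{1}{2}\cdot2\,\mu\left[\eta(-1)=1,\ \eta(0)=0,\ \eta|_{[1,k]}\equiv0\right]=(1-q)q^{k+1}.
\]
Taking $q_{0}\ge\frac{1}{2}$ (which the Chernoff requirement (\ref{eq:prob_bad}) already forces), this is at least $\frac{1}{2}(1-q)q^{k}$.

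\emph{Main obstacle.} The delicate point is the cancellation for $x\ge2$. It relies on two facts: the correspondence $\omega_{m}\leftrightarrow\omega_{2n+1-m}$ from the previous lemma, and the fact that $\mu$ and the factor $\eta(x-1)-\eta(x)$ are both invariant under the involution. I would also double-check the bookkeeping at $x=0$. Several different $\omega$ may share the same $\omega_{n}$, but this only increases the count and keeps the sign fixed, so the lower bound is unaffected.
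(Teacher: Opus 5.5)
Your proposal is correct and follows the paper's proof: the edges $x\le-1$ and $x=1$ are never used, the $x\ge2$ terms cancel through the involution $\eta\mapsto\eta^{-1,0}$ and the preceding lemma, and the surviving $(-1,0)$ term has a nonnegative integrand that you bound below on the event of a one-step path. Your final count is in fact slightly sharper than the paper's, which keeps only the case $\eta=\omega_n$ and so obtains $\frac12\mu[\eta(-1)=1,\ \eta(0)=\dots=\eta(k)=0]=\frac12(1-q)q^{k+1}$, short of the stated $\frac12(1-q)q^k$ by a factor $q$; counting both orientations and using $q\ge\frac12$ closes this gap, and the clean way to secure both $q\ge\frac12$ and $l(\eta)<\infty$ a.s.\ is to take $q_0\ge\frac{k}{k+1}$ and use the law of large numbers, not (\ref{eq:prob_bad}), which only bounds $\mu[l(\eta)=L]$ from above.
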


\begin{proof}
Using the previous lemma and the invariance of the measure under the
mapping $\eta\mapsto\eta^{-1,0}$,
\begin{multline*}
\sum_{x\ge2}\mu\left[\psi(\eta,\eta^{x-1,x})(\eta(x-1)-\eta(x))\right]=\sum_{x\ge2}\mu\left[\psi(\eta^{-1,0},\eta^{-1,0;x-1,x})(\eta^{-1,0}(x-1)-\eta^{-1,0}(x))\right]\\
=\sum_{x\ge2}\mu\left[-\psi(\eta,\eta^{x-1,x})(\eta(x-1)-\eta^{-1,0}(x))\right]=0.
\end{multline*}
Recalling Definition \ref{def:inner_prod} and the fact that all transitions
along the path are at $x\in\{0\}\cup[2,\infty)$, we obtain
\begin{eqnarray*}
\left\langle \phi^{1},\psi\right\rangle  & = & \frac{1}{2}\mu\left[\psi(\eta,\eta^{-1,0})(\eta(-1)-\eta(0))\right].
\end{eqnarray*}

Fix $\eta\in\Omega$ and $\omega\in\Omega_{0}(\eta)$. Then $\psi_{\omega}(\eta,\eta^{-1,0})\neq0$
means that either $\eta=\omega_{n}$ or $\eta=\omega_{n+1}$. In the
first case, $\psi_{\omega}(\eta,\eta^{-1,0})=1$ and $\eta(-1)-\eta(0)=1$,
and in the second $\psi_{\omega}(\eta,\eta^{-1,0})=-1$ and $\eta(-1)-\eta(0)=-1$.
Therefore
\[
\left\langle \phi^{1},\psi\right\rangle =\frac{1}{2}\mu\left[\sum_{\omega\in\Omega_{0}(\eta)}\One_{\eta\in\{\omega_{n},\omega_{n+1}\}}\right].
\]
Finally, we note that if $\eta(-1)=1$ and $\eta(0)=\eta(1)=\dots=\eta(k)=0$,
then the configuration $\omega=\eta$ is associated with a path given
by a single transition $\omega_{0}=\eta,\omega_{1}=\eta^{-1,0}$.
Therefore $\omega\in\Omega_{0}(\eta)$ and $\eta\in\{\omega_{n},\omega_{n+1}\}$,
and
\[
\left\langle \phi^{1},\psi\right\rangle \ge\frac{1}{2}\mu\left[\eta(-1)=1,\ \eta(0)=\eta(1)=\dots=\eta(k)=0\right].
\]
\end{proof}
We can now plug Lemma \ref{lem:psi_psi} and Lemma \ref{lem:phi1_psi}
into Theorem \ref{thm:thomson} to obtain a lower bound of $\frac{1}{44}(1-q)q^{2k-1}$
on the diffusion coefficient. \qed

\section{\label{sec:SOC}Remark on dynamical phase transition and self-organized
criticality}

Dynamical phase transitions often come together with self-organized
criticality (see, e.g., \cite{Rolla20ARW}): imagine that we start
with a completely empty configuration on the line $[1,L]$, and remove
vacancies (equivalently inject particles) at the boundary. In the
beginning vacancies move, occasionally reaching the boundary and being
replaced by a particle. This happens until reaching the critical density.
Then, diffusion stops, the remaining vacancies cannot reach the boundary,
and the system fixates at its critical value.

Another setting where one sees a self-organized critical state is
when starting on the infinite line with $N_{e}$ empty sites, in $[L-N_{e},L]$,
and particles elsewhere. Then vacancies disperse in space, until reaching
the critical density where they can no longer move, creating an interval
where the system is at criticality.

\begin{figure}
\begin{tabular}{cc}
\includegraphics[width=0.4\textwidth]{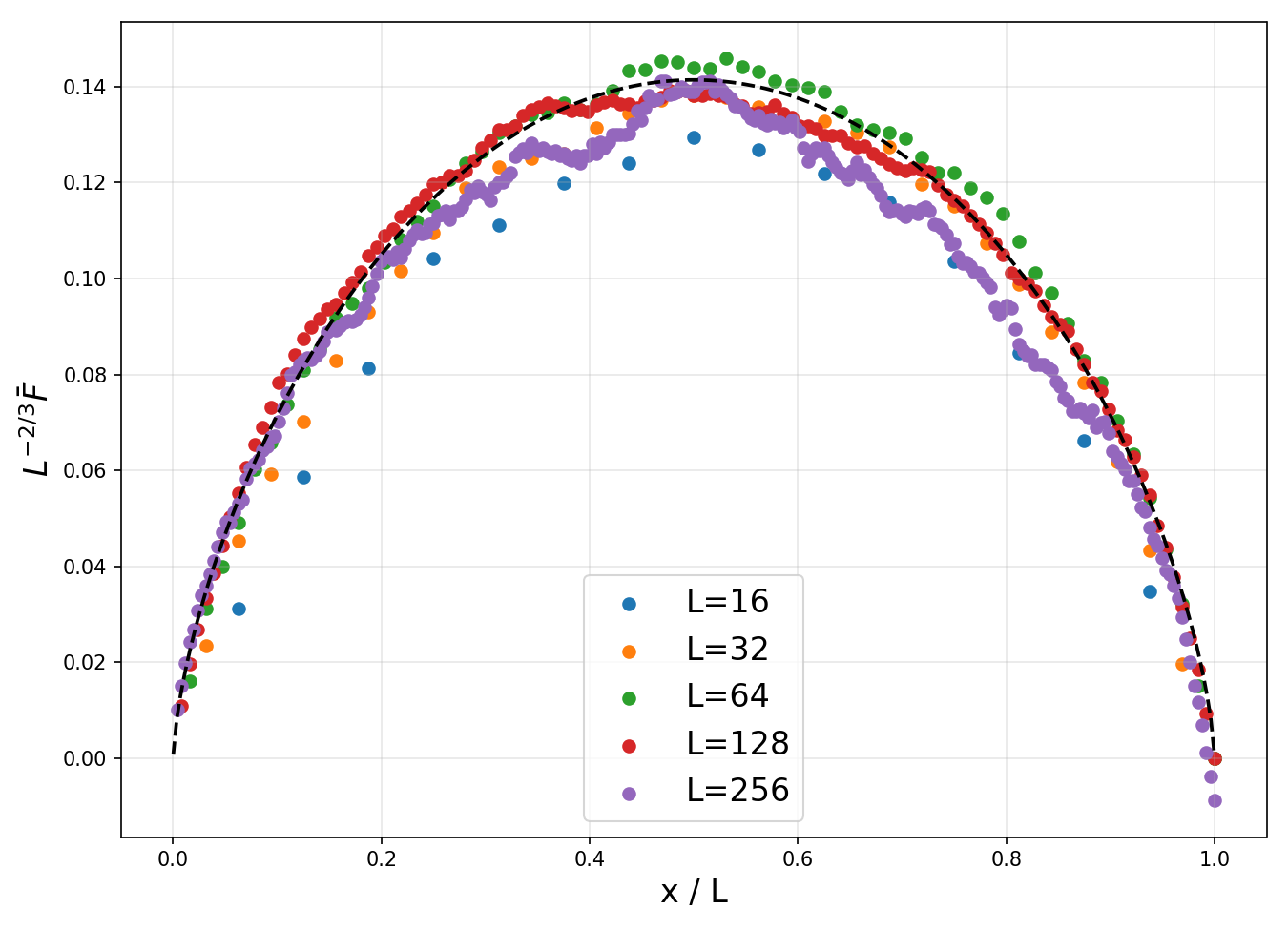} & \includegraphics[width=0.4\textwidth]{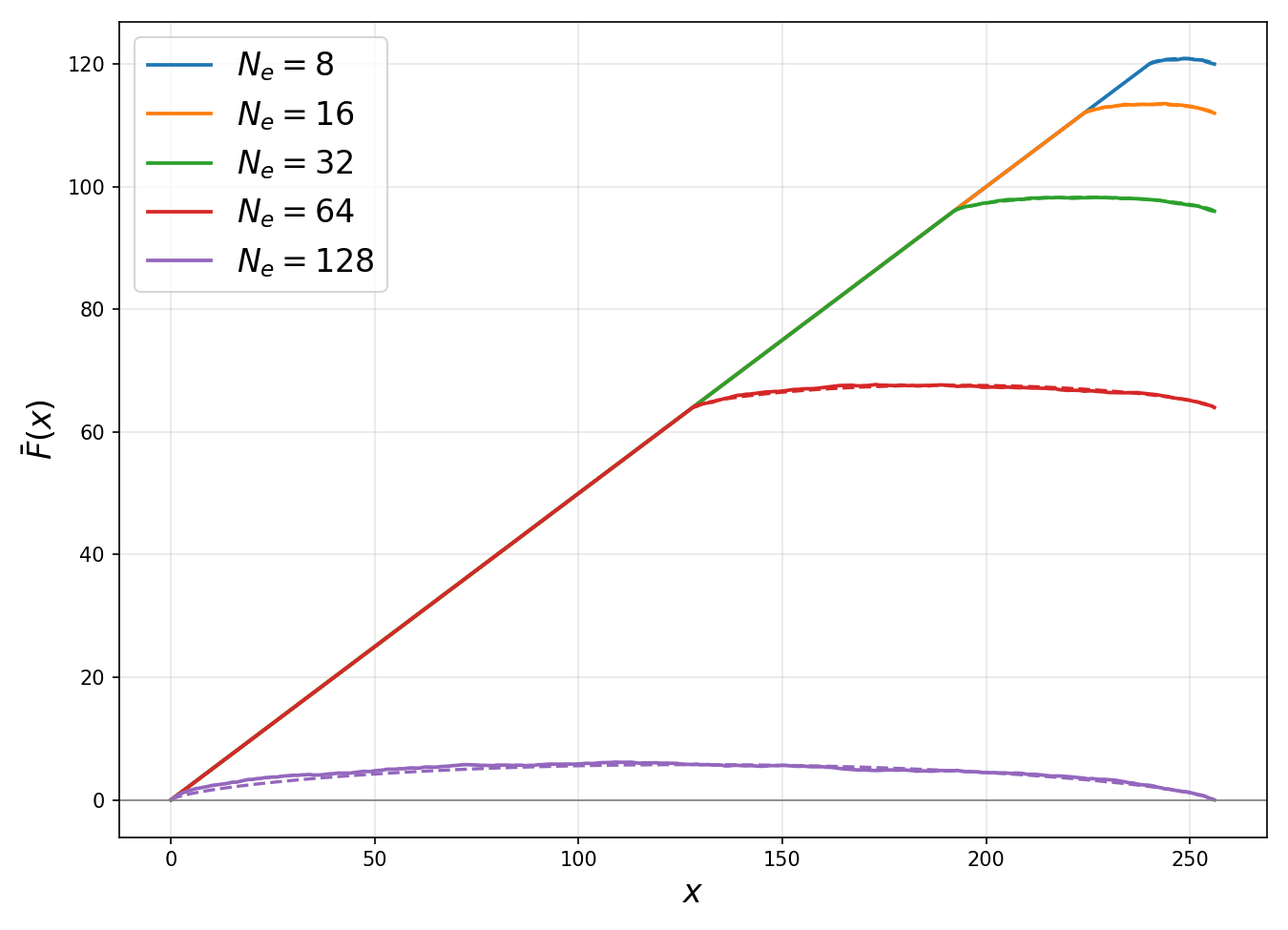}\tabularnewline
(a) & (b)\tabularnewline
\end{tabular}

\caption{\label{fig:sim_soc}The centered cumulative occupation for the conserved
East model with $k=1$, see Section \ref{sec:SOC}. (a) shows the
collapse of $\overline{F}$ for different values of $L$, when inserting
particles from the left. (b) shows, for $L=256$, the centered cumulative
occupation for different values of $N_{e}$. In both figures, the
dashed line is the fit given by equation (\ref{eq:Fbar_fit}), with
recentering in (b).}

\end{figure}

Figure \ref{fig:sim_soc} shows simulations for the conserved East
model with $k=1$ (where the ``or'' and ``and'' versions are the
same). In this case the critical density is $\frac{1}{2}$. In both
settings we reach the critical density $0.5$, plus a correction that
seems to scale as $L^{-1/3}$.

In order to see that, we analyze (for a system on $[1,L]$) the centered
cumulative occupation:
\[
\overline{F}(x)=\sum_{y=1}^{x}(\eta(y)-\frac{1}{2}).
\]

The results of the first experiment, where particles are inserted
at the (left) boundary, are given in Figure \ref{fig:sim_soc}(a).
Simulations suggest an $L^{2/3}$ scaling, with the fit: 
\begin{equation}
\overline{F}(x)=0.36\times L^{2/3}\left(\frac{x}{L}(1-\frac{x}{L})\right)^{2/3}.\label{eq:Fbar_fit}
\end{equation}
This means that the density $\rho(x)=\frac{1}{2}+\overline{F}'(x)=\frac{1}{2}+0.24\times L^{-1/3}\left(\frac{x}{L}(1-\frac{x}{L})\right)^{-1/3}\left(1-\frac{2x}{L}\right)$
is critical, with a correction $O(L^{-1/3})$. We can also see that
for $x\ll L$ (or $L-x\ll L$) this converges to an $L$-independent
function, $\rho(x)\approx\frac{1}{2}+0.24x^{-1/3}$, suggesting an
infinite limiting state with density $\frac{1}{2}$ plus a boundary
effect decaying as $x^{-1/3}$.

The second experiment, starting with vacancies in $[L-N_{e},L]$ on
the infinite line, is shown in Figure \ref{fig:sim_soc}(b). Due to
the constraint, vacancies stay to the left of $L$, and Lemma \ref{lem:leaving _A_or}
guarantees that they will never go to the left of $L-2N_{e}$. Hence
we restrict to the finite segment $[1,L]$, taking $N_{e}\le\frac{L}{2}$.
Then $\overline{F}$ is (deterministically) increasing with slope
$\frac{1}{2}$ up to $L-2N_{e}$, and then becomes flat, corresponding
to density $\frac{1}{2}$ in the interval $[L-2N_{e},L]$. We see
this phenomenon in Figure \ref{fig:sim_soc}(b), with the exact same
$L^{2/3}$ correction of equation (\ref{eq:Fbar_fit}) ($2N_{e}$
playing the role of $L$).

While this self-organized criticality has some similarities with those
observed in sandpile-type models (e.g. \cite{Rolla20ARW,ErignouxShapiraSimon2026CLG}),
it is of a somewhat different nature: the process is reversible, and
the critical state reached is not an absorbing configuration. It is
therefore unclear to which universality class it belongs, whether
the exponent measured above is really an exact $\frac{2}{3}$, and
why.

\section*{Acknowledgements}

I would like to thank Cristina Toninelli and Oriane Blondel for the
useful discussions. The code for the simulation discussed in Section
\ref{sec:SOC} was written with the assistance of Claude (Anthropic).

\bibliographystyle{plain}
\bibliography{1d_kclg}

\end{document}